\documentclass[fontsize=11pt]{amsart}

\usepackage{amsmath}
\usepackage{amsthm}
\usepackage{amssymb}
\usepackage{mathtools}
\usepackage{mathrsfs}
\usepackage{physics}
\usepackage[margin=1in]{geometry}
\usepackage{scrextend}
\usepackage{tikz-cd}
\usepackage[shortlabels]{enumitem}
\usepackage{colonequals}
\usepackage{hyperref}
\usepackage{comment}
\usepackage{setspace}
\usepackage{cleveref}

\linespread{1.0}
\usepackage[notcolon, notquote]{hanging}

\usepackage[title]{appendix}

\setlength{\footskip}{35pt}

\setlength{\parskip}{.13in minus .03in} 
\setlength{\parindent}{.3 in}

\setlength{\footskip}{35pt}

\linespread{1.0}
\usepackage[notcolon, notquote]{hanging}

\newtheorem{theorem}{Theorem}
\numberwithin{theorem}{section}

\newtheorem{lemma}[theorem]{Lemma}
\newtheorem{proposition}[theorem]{Proposition}
\newtheorem{conjecture}[theorem]{Conjecture}

\theoremstyle{definition}
\newtheorem{remark}[theorem]{Remark}

\theoremstyle{plain}
\newtheorem{Ltheorem}{Theorem}

\newcommand{\F}{\mathbb{F}} 
 
\newcommand{\C}{\mathbb{C}}

\newcommand{\Z}{\mathbb{Z}}

\newcommand{\G}{\mathbb{G}}
\newcommand{\R}{\mathbb{R}}
\renewcommand{\H}{\mathcal{H}}

\renewcommand{\O}{\mathcal{O}}
\renewcommand{\P}{\mathbb{P}}

\newcommand{\Sym}{\text{Sym}}

\newcommand{\I}{\mathcal{I}}
\usepackage{pdflscape}

\DeclareMathOperator{\Span}{Span}

\DeclareMathOperator{\NS}{NS}

\DeclareMathOperator{\Lin}{Lin}

\DeclareMathOperator{\Nef}{Nef}
\DeclareMathOperator{\PGL}{PGL}

\DeclareMathOperator{\Aut}{Aut}

\newcommand{\nathan}[1]{{\color{purple}(Nathan) #1}}

\begin{document}

\title{Subvarieties of low degree on general hypersurfaces}

\author{Nathan Chen and David Yang}

\begin{abstract}
The purpose of this note is to show that the subvarieties of small degree inside a general hypersurface of large degree come from intersecting with linear spaces or other varieties.
\end{abstract}

\maketitle

\allowdisplaybreaks

Let $X \subset \P^{n+1}$ be a general hypersurface of degree $d \ge 2n$. There has been considerable interest over the years in understanding what curves or other subvarieties can be found on $X$. It is elementary that $X$ contains lines if and only if $d < 2n$, and Wu \cite{Wu90} observed that if $n = 3$ and $d \ge 6$, then the only curves $C \subseteq X$ of degree $\delta \leq 2d - 2$ are plane sections of $X$. Our first result partially generalizes Wu’s statement to arbitrary dimensions:

\begin{Ltheorem}\label{thm:main}
Let $X \subset \P^{n+1}_{\C}$ be a general hypersurface of degree $d \geq 2n$ and let $Y \subset X$ be a positive-dimensional subvariety such that $\deg Y \leq d+2$. Then $\deg Y = d$ and $Y = X \cap \Lambda$ is the intersection of $X$ with a linear subspace $\Lambda \cong \P^{\dim Y + 1}$.
\end{Ltheorem}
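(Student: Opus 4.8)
The plan is to induct on $k=\dim Y$, cutting down by general hyperplanes to reduce to the case of curves, and then to analyze curves according to the dimension of their linear span. The organizing tool is the incidence variety
\[
\Sigma=\{(Y,X):Y\subset X\}\subset\mathcal H\times\P^{N},
\]
where $\mathcal H$ is a component of the Hilbert scheme parametrizing positive-dimensional subvarieties of fixed dimension $k$ and degree $\delta\le d+2$, and $\P^{N}$ is the space of degree-$d$ hypersurfaces. The goal is to show that the only component of $\Sigma$ dominating $\P^{N}$ is the one whose members are degree-$d$ linear sections. Since the fiber of $\Sigma\to\mathcal H$ over $[Y]$ is $\P(H^{0}(I_Y(d)))$ and the sequence $0\to I_Y(d)\to\O_{\P^{n+1}}(d)\to\O_Y(d)\to0$ gives $h^{0}(I_Y(d))=N+1-h^{0}(\O_Y(d))+h^{1}(I_Y(d))$, a sufficient condition for a component not to dominate is $\dim\Sigma<N$, i.e.
\[
\dim\mathcal H+h^{1}(I_Y(d))<h^{0}(\O_Y(d)).
\]
Whenever this numerical inequality fails I will instead rule out domination by a geometric (residual) argument.

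For the inductive step $k\ge2$, I would intersect with a general hyperplane $H$ and set $X'=X\cap H\subset\P^{n}$, $Y'=Y\cap H$. The point to nail down is that for general $X$ and general $H$ the section $X'$ is ``general enough'': if $\mathcal B_{n}\subset|\O_{\P^{n}}(d)|$ denotes the locus of degree-$d$ hypersurfaces violating the inductive conclusion, then $\mathcal B_{n}$ is proper closed by induction, and since $(X,H)\mapsto X\cap H$ dominates $|\O_{\P^{n}}(d)|$, the preimage of $\mathcal B_n$ is proper closed in the irreducible space of pairs, so a general $X$ satisfies $X\cap H\notin\mathcal B_{n}$ for general $H$. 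Granting this, $Y'$ is a positive-dimensional subvariety of $X'$ of degree $\delta\le d+2$, so by induction $Y'=X'\cap\Lambda'$ with $\Lambda'\cong\P^{k}$; in particular $\deg Y=\deg Y'=d$, and comparing spans as $H$ varies forces $\langle Y\rangle\cong\P^{k+1}$ and hence $Y=X\cap\langle Y\rangle$. This reduces everything to curves.

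For a curve $Y\subset X$ nondegenerate in $\Lambda=\langle Y\rangle\cong\P^{r}$, I split on $r$. If $r=1$ then $Y$ is a line, which is excluded precisely because the Fano scheme of lines is empty once $d\ge2n$; this is where the hypothesis enters and is the one case to check by hand. If $r=2$ then $X\cap\Lambda$ is a plane curve of degree $d$ containing $Y$, so $\delta\le d$; if $\delta=d$ we get the desired linear section, while if $\delta<d$ then either $\delta\le d/2$, or the residual divisor $R=(X\cap\Lambda)-Y$ is a nonzero plane curve on $X$ with a component of degree $\le d-\delta<d/2$. In both cases we land on a plane curve (or a line) of degree $\le d/2$, and for such curves a direct computation gives $\dim\mathcal H=\dim\G(3,n+2)+\binom{\delta+2}{2}-1$ and $h^{0}(\O_Y(d))=\binom{d+2}{2}-\binom{d-\delta+2}{2}$, whence $h^{0}(\O_Y(d))-\dim\mathcal H=\delta(d-\delta)-3(n-1)$, which is positive for $2\le\delta\le d/2$ and $d\ge2n$; lines are excluded as above.

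The remaining and hardest case is a space curve, $r\ge3$, where there is no residual to exploit and the numerical inequality must be made to work directly. On the section side $h^{0}(\O_Y(d))=\delta d-g+1$ once $d$ exceeds the Castelnuovo--Mumford regularity of $Y$, which by the Gruson--Lazarsfeld--Peskine bound holds comfortably when $\delta\le d$ but is borderline for $\delta\in\{d+1,d+2\}$ with $r$ small, so those boundary degrees will need separate care. On the parameter side $\dim\mathcal H\le(r+1)(n+1-r)+\dim\mathcal H^{r}_{\delta,g}$, where $\mathcal H^{r}_{\delta,g}$ is the Hilbert scheme of nondegenerate degree-$\delta$ genus-$g$ curves in $\P^{r}$. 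I expect the main obstacle to be bounding $\dim\mathcal H^{r}_{\delta,g}$ uniformly: for unobstructed components it is controlled by $\chi(N_{Y/\P^{r}})=(r+1)\delta+(r-3)(1-g)$, and because $d\ge2n$ the section count $\delta d-g+1$ outstrips $(r+1)(n+1-r)+\chi(N_{Y/\P^{r}})$ with room to spare, but the Hilbert scheme of space curves is famously reducible and non-reduced, so controlling possibly obstructed components—equivalently bounding $h^{1}(N_{Y/\P^{r}})$, or invoking a degree-based bound on the dimension of \emph{every} component—is the crux. If that uniform bound is secured, the three span cases close the base of the induction and the hyperplane argument finishes all higher $k$.
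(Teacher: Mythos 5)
Your reduction to the curve case and your treatment of spans of dimension $\le 2$ are sound and essentially parallel the paper (the paper slices $Y$ down to a curve inside the fixed $X\subset\P^{n+1}$ rather than inducting on hyperplane sections, and its Proposition 1.2 is your residual-plus-incidence-count argument for plane curves). The genuine gap is exactly where you flag it: the case $r=\dim\Span(C)\ge 3$ is not an obstacle to be smoothed over later, it is the entire content of the theorem, and your proposed route through $\dim\mathcal H^{r}_{\delta,g}$ does not close. There is no usable uniform upper bound on the dimension of \emph{every} component of the Hilbert scheme of integral space curves of degree $\delta$ and genus $g$ in terms of $\chi(N_{Y/\P^{r}})$; obstructed components can be larger, and no general theorem supplies the bound you need at this level of generality. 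The paper's way around this is to abandon the Hilbert scheme and instead parametrize curves by generically injective maps $i\colon C'\to\P^{r}$ from the normalization. The dimension of any component of the space of such maps at a point is bounded by the tangent space dimension $h^{0}(i^{\ast}T_{\P^{r}})+3g'-3$, and the Euler sequence gives $h^{0}(i^{\ast}T_{\P^{r}})\le (r+1)h^{0}(i^{\ast}\O(1))+g'-1$; this bound is component-independent, which is precisely what your approach lacks. Feeding it into the incidence correspondence (with the fiber dimension computed via Gruson--Lazarsfeld--Peskine regularity, your Lemma on $h^{0}(\O_C(d))=d\delta+1-p_a$) yields the \emph{lower} bound $p_{a}(C)\ge d\delta-(r+1)(n+1-r+h^{0}(f^{\ast}\O_{C}(1)))-4p_{g}(C)+5$, with $h^{0}(f^{\ast}\O_C(1))$ controlled by Riemann--Roch or Clifford according to whether the pullback of $\O(1)$ is special.

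Even granting that device, the contradiction with Castelnuovo's bound alone only disposes of $r\ge 4$ (after first establishing $p_{g}(C)\ge 4$, which itself needs Voisin's non-existence of rational curves and a genus bound of Clemens--Ran type, plus a separate check for $(n,d)=(4,8)$). For $r=3$ the plain Castelnuovo bound $p_a\le \delta^2/4-\delta+1$ is too weak, and the paper must first rule out that $C$ lies on a surface of degree $\le 3$ (and of degree $4$ when $\delta\ge 10$) by a \emph{different} incidence argument --- bounding the curves on a fixed surface $S$ via $h^{0}(N_{C'/S'})\le 1+\delta^{2}/\deg S$ using the Hodge index theorem --- and only then invoke the refined Halphen, Gruson--Peskine and Harris genus bounds, together with a specialty theorem to handle the residual low-degree cases $\delta\le 14$ by forcing $C$ onto a cubic or quartic surface. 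None of this secondary structure is present in your outline, so as written the proof does not go through for any curve with $\dim\Span(C)\ge 3$.
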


\noindent When $d \gg n$, the statement that $\deg Y \geq d$ follows from \cite[Theorem A]{CCZ24}, where the authors establish a more general degree bound for subvarieties of complete intersection varieties. Theorem~\ref{thm:main} answers \cite[Question 6.7]{CCZ24} for hypersurfaces. 

Under stronger degree hypotheses, we prove an analogous statement for subvarieties of higher degree:

\begin{Ltheorem}\label{thm:B}
    Fix any integer $s$. There exists a positive integer $d_{0} = d_{0}(s, n)$ with the following property.\footnote{See Theorem~\ref{thm:curvesHigherDegree} and Remark~\ref{rem:constantd_0} for more precise numerics.} Let $X \subset \P^{n+1}$ be a general hypersurface of degree $d \geq d_{0}$ and let $Y \subset X$ be any subvariety of dimension $m \geq 1$ and degree $\delta \leq ds$. Then $\delta$ is a multiple of $d$ and $Y$ is equal to the generically transverse intersection $Y = X \cap V$ for a unique variety $V \subset \P^{n+1}$ of dimension $m+1$.
\end{Ltheorem}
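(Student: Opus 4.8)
The plan is to reduce to the case of curves and then run a dimension count on an incidence variety of curves and hypersurfaces. First I would reduce the dimension of $Y$: given $Y \subset X$ of dimension $m \geq 2$ and degree $\delta$, I intersect with a general linear subspace $\Pi \cong \P^{n+2-m}$ of codimension $m-1$, so that $C := Y \cap \Pi$ is a curve of degree $\delta$ contained in the general hypersurface $X' := X \cap \Pi$ of degree $d$ in $\Pi$. Provided $d_0(s,n)$ is chosen so that the curve case applies in every ambient dimension $\le n+1$, I conclude that $\delta = kd$ for some $1 \le k \le s$ and that $C$ is cut out on $X'$ by a surface of degree $k$. The remaining work in the inductive step is to assemble these sections into a single $V \subset \P^{n+1}$ of dimension $m+1$ and degree $k$ with $Y = X \cap V$: I would do this by showing $h^0(\P^{n+1}, \mathcal{I}_Y(k))$ is large enough to cut out a variety of the correct dimension through $Y$ — using that each general hyperplane section lies on a degree-$k$ surface, together with the restriction sequences $0 \to \mathcal{I}_Y(k-1) \to \mathcal{I}_Y(k) \to \mathcal{I}_{Y\cap H/H}(k) \to 0$ and upper semicontinuity — and then forcing $Y = X \cap V$ by comparing degrees (both equal $kd$) with generic transversality.

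The heart of the argument is the curve case, which I would attack via a dimension count on the incidence variety $\mathcal{I} = \{(C,X) : C \subset X\}$, where $C$ ranges over the Hilbert scheme of degree-$\delta$ curves in $\P^{n+1}$ and $X$ over the space $\P^N$ of degree-$d$ hypersurfaces, $N = h^0(\mathcal{O}_{\P^{n+1}}(d)) - 1$. Writing $\mathcal{B}$ for the locus of \emph{bad} curves — those of degree $\delta \le ds$ that are neither of degree a multiple of $d$ nor of the form $X \cap V$ — I want the projection $\mathcal{I}|_{\mathcal{B}} \to \P^N$ to be non-dominant, so a general $X$ contains no bad curve. Since the fiber of $\mathcal{I}$ over $C$ is the linear system $|\mathcal{I}_C(d)|$, of dimension $N - \delta d + g - 1$ once $d$ is large (so that $H^0(\mathcal{O}(d)) \to H^0(\mathcal{O}_C(d))$ is surjective and $h^1(\mathcal{O}_C(d)) = 0$), non-dominance reduces to the numerical inequality
\[
\dim \mathcal{B} + g < \delta d + 1
\]
on every component, where $g = g(C)$. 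Curves of degree not divisible by $d$ lie entirely in $\mathcal{B}$, so this simultaneously yields the divisibility claim $\delta \equiv 0 \pmod d$.

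The main obstacle is exactly this inequality for curves of high genus: Castelnuovo's bound permits $g$ as large as roughly $\delta^2 / 2n$, so when $\delta$ is near $ds$ and $s \ge 2n$ the term $g$ can rival $\delta d$ and the count fails on its face. To overcome this I would stratify $\mathcal{B}$ by the minimal degree of a surface containing $C$. Castelnuovo theory forces curves of near-maximal genus to lie on surfaces of small degree; hence on the stratum of curves lying on no surface of degree $\le t$ one has a strengthened genus bound making $g$ sub-dominant, so the inequality holds, while on the stratum of curves lying on a fixed low-degree surface $V$ the family is controlled by $\dim\{\text{surfaces } V\}$ plus the dimension of a linear system on $V$, and there $C \subseteq X \cap V$ forces either $C = X\cap V$ (the good case) or a proper subcurve of strictly smaller degree, which I would either exclude directly or feed back into the induction on degree. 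Balancing the two ranges of $\delta$ gives non-dominance on all bad components once $d \ge d_0(s,n)$.

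Finally I would record the clean-up points. Uniqueness of $V$ follows because $V$ is recovered from $Y$ as the minimal-degree variety of dimension $m+1$ through it, which the same count shows is well defined for general $X$; generic transversality of $X \cap V$ is automatic from $\deg(X\cap V) = d\cdot\deg V = kd = \deg Y$ together with $Y \subseteq X \cap V$ and equality of dimensions. As a consistency check, specializing $s$ to the range $\delta \le d+2$ recovers Theorem~\ref{thm:main} with $V = \Lambda$ linear.
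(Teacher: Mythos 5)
Your reduction to curves and your clean-up remarks (uniqueness, transversality by degree comparison) are in the spirit of the paper, and the paper does prove the curve case by an induction on $s$ combined with Castelnuovo--Gruson--Peskine-type stratification by the minimal degree of a surface containing $C$. But the engine you propose for the curve case --- a raw dimension count on the incidence variety $\{(C,X): C\subset X\}$ --- has a genuine gap. Your fiber computation $\dim|\I_C(d)| = N - \delta d + g - 1$ requires the restriction map $H^0(\O_{\P^{n+1}}(d))\to H^0(\O_C(d))$ to be surjective, and the only general tool for this (Gruson--Lazarsfeld--Peskine regularity) needs $d \geq \delta + 2 - r$. Since $\delta$ ranges up to $sd$, this threshold scales with $d$ and is never met for $s\geq 2$; taking ``$d$ large'' does not help. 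Without surjectivity the fibers can be much larger (for a curve of degree $\approx sd$ spanning a small $\P^r$, Castelnuovo-type counting only guarantees an image of dimension roughly $(r-1)d^2$, not $sd^2$), and the inequality $\dim\mathcal{B}+g<\delta d+1$ does not close. The paper sidesteps this entirely: for degrees beyond $d+2$ it abandons the incidence count as a source of genus \emph{lower} bounds and instead imports Voisin's theorem that any integral curve on a very general hypersurface of degree $d\geq 2n$ satisfies $p_g(C)\geq 1+\tfrac12(d-2n-1)\delta$. That external input, played against the Ciliberto--Chiantini--Di Gennaro upper bound for curves not on surfaces of degree $\leq k$, is what forces $C$ onto a low-degree surface; your proposal has no substitute for it.

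Two further points are underdeveloped. First, on the stratum of curves lying on an integral surface $S$ of degree $\leq s$, ``exclude directly or feed back into the induction on degree'' is where the actual work lives: the paper shows $X\cap S$ is integral by noting that if it broke into components $C_i$, each would have degree in $[d,(k-1)d]$, hence by the inductive hypothesis $C_i = T_i\cap X$ for some surface $T_i\neq S$, and then $C_i\subset S\cap T_i$ yields a B\'ezout contradiction $d\cdot a_i\leq k\cdot a_i<d\cdot a_i$. The divisibility $\delta\equiv 0\pmod d$ is a consequence of this integrality, not of the dimension count. Second, for the step from curves to higher-dimensional $Y$, your plan to assemble $V$ from $H^0(\I_Y(k))$ is shaky: a variety of dimension $m+1$ and degree $k$ need not be cut out (even set-theoretically along $Y$) by forms of degree $k$, so producing $V$ this way and controlling its degree is not straightforward. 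The paper instead sweeps out $V$ by a general pencil of hyperplane sections $\{H_t\}$, using the uniqueness of each $V_t$ with $Y\cap H_t = X\cap V_t$ to show $V_{t'}\cap H_t$ is independent of $t'$, so the $V_t$ glue into the desired $V$. That argument is both more elementary and avoids any ideal-sheaf cohomology.
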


\noindent In other words, subvarieties of relatively small degree with respect to $d$ all arise as complete intersections of $X$ with proper subvarieties of the ambient projective space, up to embedded points. By \textit{generically transverse} intersection, we mean that $X \cap V$ is generically reduced and $Y$ is equal to $(X \cap V)^{\text{red}}$ as schemes. Note that one can get embedded points if $V$ is singular. For example, let $V \subset \P^{4}$ be the projection of the Veronese surface $\sigma \colon \P^{2} \hookrightarrow \P^{5}$ from a general point lying on the secant variety to $\sigma(\P^{2})$; then the degree of $V$ is equal to 4 and $V$ is not Cohen-Macaulay at a finite number of double points. Therefore, the intersection $V \cap X$ with any smooth hypersurface $X$ passing through one of these singular points of $V$ will be non-reduced.

In a slightly different direction, a theorem of Voisin states that a very general hypersurface $X \subset \P^{n+1}$ of degree $d \geq 2n$ does not contain any rational curves. This story goes back to the work of Clemens \cite{Clemens86} and Ein \cite{Ein88}, and since then there has been a great deal of work by many others on studying the geometric genus of subvarieties in very general hypersurfaces (cf. \cite{Xu94, Pacienza04, CR04, CR19}). For instance, if $X \subset \P^{n+1}$ is a very general hypersurface of degree $d$, then under suitable numerical hypotheses it is known that any subvariety of $X$ must be of general type. Theorem~\ref{thm:main} can be combined with the methods in \cite{Voisin96} to give a bound on the geometric genus of any curve in $X$. There are also some related conjectures of Griffiths--Harris \cite{GH85} from the mid-80s about curves on a very general hypersurface $X \subset \P^{4}$ of degree $d \geq 6$; the strongest form of their conjecture simply asks whether every curve in $X$ is the complete intersection with some surface. This was answered in the negative in a paper of Voisin \cite{Voisin89}, where she constructed counterexamples on any smooth threefold hypersurface.

The starting point for this paper is the observation that the numerics for the geometric genus of curves in very general hypersurfaces closely mirror those coming from Castelnuovo-type bounds (and its variations due to Halphen, Harris, and Gruson--Peskine) for a non-degenerate curve in projective space. In \S1, we will reduce the proof of Theorem~\ref{thm:main} to a special case (Theorem~\ref{thm:planeCurvesMinimal}). Using incidence correspondences and regularity results of Gruson--Lazarsfeld--Peskine \cite{GLP83} for curves, we will prove in \S2 that low degree curves in general hypersurfaces satisfy fairly strong arithmetic genus lower bounds. In \S3, we will show that these lower bounds violate the Castelnuovo-type bounds unless the dimension of the span of the curve is small to begin with, and use this to ultimately prove Theorem~\ref{thm:planeCurvesMinimal}. Finally, the proof of Theorem~\ref{thm:B} will be given in \S4, and involves an induction argument with Theorem~\ref{thm:main} as the base case. 

\noindent\textbf{Conventions.} Throughout, we work over the complex numbers. By \textit{variety}, we always mean a projective integral scheme of finite type over $\C$. A \textit{curve} is a variety of dimension 1.

We will use $\P^{N_{d}}$ to denote the projective space parametrizing hypersurfaces $X \subset \P^{n+1}$ of degree $d$, where $N_{d} = \binom{d+n+1}{n+1} - 1$. Since there are finitely many Hilbert schemes parametrizing curves in $\P^{n+1}$ of bounded degree, in almost all of our results we may assume without loss of generality that $X \subset \P^{n+1}$ is a \textit{very} general hypersurface, meaning outside of a countable union of subvarieties in $\P^{N_{d}}$. For the reader's convenience, we will use parenthesis to indicate when this assumption can be dropped a posteriori. By $\Lin(\P^{r}, \P^{n+1})$ we mean the parameter space of linear embeddings $\P^{r} \hookrightarrow \P^{n+1}$.

\noindent\textbf{Acknowledgements.} We would like to thank Ben Church, Joe Harris, Rob Lazarsfeld, John Christian Ottem, and Eric Riedl for their helpful comments and valuable conversations. During the preparation of this article, N.C. was partially supported by NSF grant DMS-2103099 and D.Y. was partially supported by NSF grant DMS-2402082.

\section{Genus bounds for curves on very general hypersurfaces}

In order to prove the results stated in the introduction, in the next three sections we will aim to prove a special case:

\begin{theorem}\label{thm:planeCurvesMinimal}
Let $X \subset \P^{n+1}_{\C}$ be a general hypersurface of degree $d \geq 2n$ and let $C \subset X$ be a curve of degree $\delta$. If $\delta \leq d+2$, then $\delta = d$ and $C = X \cap \Lambda$ for some 2-plane $\Lambda$.
\end{theorem}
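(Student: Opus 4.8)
The plan is to run an incidence–correspondence and dimension-count argument, organized according to the dimension $r = \dim\langle C\rangle$ of the linear span of $C$, and to show that for $r \geq 3$ no such curve can lie on a very general $X$, while for $r \leq 2$ the curve is forced to be a full plane section. By the conventions we may assume $X$ is very general, so it suffices to show that for each fixed numerical type (degree $\delta \le d+2$, arithmetic genus $p_a$, span dimension $r$) the incidence variety $\Sigma = \{(C,X) : C \subseteq X\}$, sitting over the relevant Hilbert scheme $\mathcal{H}$ and over $\P^{N_d}$, fails to dominate $\P^{N_d}$ — unless $\delta = d$ and $C = X \cap \Lambda$ is a plane section.

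First I would analyze the fibers of $\Sigma \to \mathcal{H}$: over a point $[C]$ the fiber is $\P(H^0(\mathcal{I}_C(d)))$, so dominance forces the number of conditions $h^0(\mathcal{O}_C(d)) - h^1(\mathcal{I}_C(d)) \le \dim\mathcal{H}$. The crucial input is Gruson--Lazarsfeld--Peskine: for an integral nondegenerate curve in $\P^r$ one has $H^1(\mathcal{I}_C(\ell)) = 0$ for all $\ell \ge \delta - r + 1$, and since $\delta \le d+2$ this applies at $\ell = d$ precisely when $r \geq 3$. Granting the vanishing, the condition count becomes $h^0(\mathcal{O}_C(d)) \ge \chi(\mathcal{O}_C(d)) = \delta d + 1 - p_a$, which rearranges into the arithmetic genus lower bound
\[
p_a \ \ge\ \delta d + 1 - \dim\mathcal{H}.
\]
Because $\delta$ is close to $d$, the right-hand side is of order $d^2$ as soon as $\dim\mathcal{H}$ is controlled.

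The crux is therefore to bound $\dim\mathcal{H}$ from above and then to collide the resulting lower bound with the Castelnuovo-type upper bound $p_a \le \pi(\delta,r) \approx \tfrac{\delta^2}{2(r-1)}$ for nondegenerate curves in $\P^r$. I would bound $\dim\mathcal{H}$ by splitting off the choice of span (a Grassmannian $\mathrm{Gr}(r+1,n+2)$ of dimension $(r+1)(n+1-r)$) and bounding the family of nondegenerate degree-$\delta$ curves in a fixed $\P^r$ through the normalization data — moduli of the abstract curve, its Picard variety, and the linear series cutting out the embedding — using Clifford's theorem to control special linear systems. For $r \geq 3$ the lower bound of order $d^2$ outstrips $\pi(\delta,r) \lesssim \tfrac{d^2}{4}$, giving a contradiction. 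The delicate case is $r = 3$ with curves of near-maximal genus, which are forced onto a surface of minimal degree (a quadric); here the refined bounds of Halphen, Harris, and Gruson--Peskine — equivalently, a direct count of curves of bidegree $(a,b)$ with $a+b=\delta$ on a quadric — are what close the gap. I expect this control of $\dim\mathcal{H}$ for special, high-genus curves to be the main obstacle.

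Finally, having forced $r \le 2$, I would dispatch the plane-curve case. A curve spanning a line is impossible, since a very general $X$ of degree $d \ge 2n$ contains no lines. If $C$ spans a plane $\Lambda$, then $\Lambda \not\subseteq X$ and $C \subseteq X \cap \Lambda$, a plane curve of degree $d$; factoring $f_X|_\Lambda = f_C \cdot g$ exhibits a residual plane curve of degree $d-\delta$ on $X$. A direct dimension count for plane curves shows that dominance requires $\delta(d-\delta) \le 3(n-1)$, which (using $d \ge 2n$) already rules out every $\delta \le d-2$, while a curve of degree $\delta = d-1$ would have a residual line on $X$, again excluded. Hence $\delta = d$ and $C = X \cap \Lambda$, completing the proof.
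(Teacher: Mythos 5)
Your plan is essentially the paper's proof: the same incidence--correspondence dimension count over the Hilbert scheme, the same use of Gruson--Lazarsfeld--Peskine regularity to make degree-$d$ hypersurfaces cut out a complete nonspecial series on $C$ once $r\ge 3$, the same collision of the resulting genus lower bound with Castelnuovo and its Halphen/Harris/Gruson--Peskine refinements in low span, and the same $\delta(d-\delta)>3n-4$ count (plus the no-lines fact) to force $\delta=d$ and $C=X\cap\Lambda$ in the plane case. The step you defer is exactly where the paper works hardest: because the moduli count carries a $-4p_g(C)$ term, Clifford's theorem in the special-line-bundle case only yields $p_a\gtrsim\tfrac{1}{5}d\delta$, which does \emph{not} beat the plain Castelnuovo bound $\approx\tfrac{\delta^2}{4}$ when $r=3$, so the paper must first rule out surfaces of degree $\le 4$ containing $C$ by separate incidence counts and, for the residual range $7\le\delta\le 14$, invoke the Gruson--Peskine Th\'eor\`eme de Sp\'ecialit\'e together with Riemann--Roch to force $C$ onto such a surface and reach a contradiction.
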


Theorem~\ref{thm:main} follows immediately from this:

\begin{proof}[Proof of Theorem~\ref{thm:main}]

As in the statement of the theorem, let $X \subset \P^{n+1}$ be a general hypersurface of degree $d \geq 2n$ and let $Y \subset X$ be a positive-dimensional subvariety such that $\deg Y \leq d+2$. Consider general hyperplanes $H_{1}, \ldots, H_{s}$ in $\P^{n+1}$, where $s = \dim Y - 1$. Slicing down $Y$, we then obtain a curve
\[ V \coloneqq Y \cap H_{1} \cap \cdots \cap H_{s} \]
of degree $\leq d+2$ which is contained in $X$ (by Bertini's theorem $V$ is integral, but we will not need this). Theorem~\ref{thm:planeCurvesMinimal} implies that $\deg V = d$ and $V$ is a plane curve, i.e. $\dim \Span V = 2$. At each step of slicing, the degree stays the same and the span decreases by 1 since the $H_{i}$ are general. Thus, working backwards we see that $\deg Y = d$ and $\dim \Span(Y) = 2 + s = \dim Y + 1$. Setting $\Lambda \coloneqq \Span(Y)\cong \P^{\dim Y + 1}$, it follows that $Y = X \cap \Lambda$.
\end{proof}

An invariant that will feature prominently in the proof of Theorem~\ref{thm:planeCurvesMinimal} is the dimension of the span of $C$ inside $\P^{n+1}_{\C}$. As mentioned in the introduction, a general hypersurface $X \subset \P^{n+1}$ of degree $d \geq 2n$ does not contain any lines, so $\dim \Span(C) \geq 2$. In fact, we claim that it suffices to show $\dim \Span(C) = 2$:

\begin{proposition}\label{prop:incidence2plane}
On a general hypersurface $X \subset \P^{n+1}$ of degree $d \geq 2n$, the intersection $X \cap \Lambda$ with every 2-plane $\Lambda$ is an integral curve.
\end{proposition}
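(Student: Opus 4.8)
The plan is to run an incidence-correspondence dimension count, using as essential input the classical fact (already invoked in the text) that a general $X$ of degree $d \ge 2n$ contains no lines. First I would record that $\Lambda \not\subset X$ for every $2$-plane $\Lambda$: otherwise $X$ would contain a $\P^2$ and hence a line. Thus $X \cap \Lambda$ is always a plane curve of degree $d$ inside $\Lambda \cong \P^2$, and the content of the proposition is that this plane curve is reduced and irreducible for \emph{every} $\Lambda$.

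Let $\G$ denote the Grassmannian of $2$-planes in $\P^{n+1}$, of dimension $3(n-1)$, and form the incidence variety
\[ \I \;=\; \{ (\Lambda, X) \in \G \times \P^{N_d} : X \cap \Lambda \text{ is non-integral} \}. \]
I want to show that the projection $\I \to \P^{N_d}$ is not dominant, so that the general $X$ lies in its complement; since $\I$ is a single variety, its constructible image has dimension $\le \dim \I$, and this even yields a Zariski-dense open set of good $X$. I would compute $\dim \I$ fibrewise over $\G$. For fixed $\Lambda$, the restriction $H^0(\P^{n+1}, \O(d)) \twoheadrightarrow H^0(\Lambda, \O(d))$ is surjective, so the locus of $X$ restricting to a prescribed plane curve $D \subset \Lambda$ is a linear space of dimension $N_d - \binom{d+2}{2} + 1$.

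The crux is to bound the dimension of the family of non-integral plane curves $D$ of degree $d$ in a fixed $\P^2$. The naive bound is governed by the reducible curves containing a line component, which form a family of codimension only $d-1$; this alone pushes the count through only for $d > 3n-2$, which is weaker than $d \ge 2n$. The key point is that any such $D$ with a line component $L$ would give $L \subset X \cap \Lambda \subset X$, contradicting the no-lines fact, so after discarding the line-component locus I may assume every component of $D$ has degree $\ge 2$. The extremal such family is $\{\text{conic}\} \cup \{\text{degree } d-2 \text{ curve}\}$, of dimension $5 + \left(\binom{d}{2}-1\right)$, hence of codimension $2d-4$ in the $\P^{\binom{d+2}{2}-1}$ of all degree-$d$ plane curves (for $d \ge 4$, i.e. $n \ge 2$; the cases $n \le 1$ are trivial). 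Combining, the no-line-component part of $\I$ has dimension at most
\[ 3(n-1) + \left( N_d - 2d + 4 \right), \]
which is $< N_d$ precisely when $2d > 3n+1$, and this is implied by $d \ge 2n$ once $n \ge 2$.

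The main obstacle, and the step deserving the most care, is verifying that $2d-4$ really is the minimal codimension among non-integral plane curves with no line component, i.e. that no non-reduced configuration or alternative splitting produces a larger family, and confirming that the no-lines hypothesis is exactly what bridges the gap between the naive threshold $d > 3n-2$ and the sharp hypothesis $d \ge 2n$. Once this codimension bookkeeping is pinned down, the non-dominance of $\I \to \P^{N_d}$ is immediate and proves the proposition.
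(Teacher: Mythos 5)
Your proof is correct, and its skeleton --- an incidence correspondence fibered over the space of $2$-planes, with the key computation being that the locus of hypersurfaces whose trace on a fixed $\Lambda$ splits off a degree-$a$ component has codimension $a(d-a)$ before accounting for the $3(n-1)$ parameters of $\Lambda$ --- is the same as the paper's. The genuine difference is in how the small values of $a$ are handled. The paper invokes Voisin's theorem that a very general $X$ with $n\geq 3$ contains no rational curves in order to discard $a\leq 2$ outright, runs the count only for $3\leq a\leq d/2$, treats $n=2$ separately via Noether--Lefschetz, and then passes from very general back to general. You instead discard only $a=1$, using the elementary fact that $X$ contains no lines when $d\geq 2n$, and absorb $a=2$ into the dimension count: the worst case is the conic-plus-residual family of codimension $2d-4$, and $2d-4>3(n-1)$ is exactly $2d>3n+1$, which $d\geq 2n$ gives for all $n\geq 2$. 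The bookkeeping you flag as delicate does close: every non-integral degree-$d$ plane curve with no line component lies in the image of $|\O_{\P^{2}}(a)|\times|\O_{\P^{2}}(d-a)|$ for some $2\leq a\leq d/2$ (non-reduced curves $kD_{0}$ included, written as $D_{0}+(k-1)D_{0}$), and that image has codimension at least $a(d-a)\geq 2(d-2)$ in $|\O_{\P^{2}}(d)|$. What your route buys is that it is entirely elementary and uniform: no Voisin, no Noether--Lefschetz, no very-general-to-general reduction, and $n=2$ comes for free; the only cost is a thinner numerical margin, which the hypothesis $d\geq 2n$ still covers comfortably.
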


\begin{proof}
By a dimension count, $X$ does not contain any 2-planes. The case $n = 2$ follows from the Noether-Lefschetz theorem (plus a standard Hilbert scheme argument to pass from very general to general). Assuming $n \geq 3$, we will estimate the space of hypersurfaces for which the intersection with a 2-plane $\Lambda$ splits off a plane curve of degree $a < d$. Consider the incidence correspondence:
\begin{center}
\begin{tikzcd}
\Phi_{a} \arrow[r, phantom, "\coloneqq"] &[-0.2in] {\{ (X_{d} , f \colon \P^{2} \rightarrow \P^{n+1}, C) \in \P^{N_{d}} \times \Lin(\P^{2}, \P^{n+1}) \times |\O_{\P^{2}}(a)| \colon f(C) \subset X_{d} \}}
\end{tikzcd}
\end{center}
There is an action of $\PGL_{3} = \Aut(\P^{2})$ on $\Phi_{a}$ which is defined by $\phi \cdot (X, f, C) = (X, f \circ \phi, \phi^{-1}(C))$. Since we are interested in the orbit space, we will pass to the quotient $\Phi_{a}/\PGL_{3}$, which then admits the following projection maps:
\begin{center}
\begin{tikzcd}[column sep=small]
    & \Phi_{a}/\PGL_{3} \arrow[ld, swap, "\pi_{1}"] \arrow[rd, "\pi_{2}"] & \\
    \P^{N_{d}} & & {(\Lin(\P^{2}, \P^{n+1}) \times |\O_{\P^{2}}(a)|)/\PGL_{3}}
\end{tikzcd}
\end{center}
If $\dim \Phi_{a}/\PGL_{3} < N_{d}$, then $\Phi_{a}/\PGL_{3}$ cannot dominate $\P^{N_{d}}$ and so we are done.

When $n \geq 3$, a very general $X$ does not contain rational curves \cite{Voisin96}, so we may assume $a \geq 3$. Fixing a point in the image of $\pi_{2}$ amounts to fixing the 2-plane $\Lambda$ and a plane curve $C \subset \Lambda$ of degree $a$. The fiber over such a point is isomorphic to the projectivization of $H^{0}(\P^{n+1}, \I_{C}(d))$. The latter group is the kernel of the restriction map $H^{0}(\P^{n+1}, \O_{\P^{n+1}}(d)) \twoheadrightarrow H^{0}(\Lambda, \O_{\Lambda}(d)) \twoheadrightarrow H^{0}(C, \O_{C}(d))$, so $\pi_{2}$ has relative dimension equal to $h^{0}(\P^{n+1}, \O_{\P^{n+1}}(d)) - h^{0}(C, \O_{C}(d)) - 1$. On the other hand, the image of $\pi_{2}$ has dimension at most
\begin{align*}
\dim [(\Lin(\P^{2}, \P^{n+1}) \times |\O_{\P^{2}}(a)|)/\PGL_{3}] = 3(n-1) + h^{0}(\O_{\P^{2}}(a)) - 1.
\end{align*}
Therefore,
\begin{align*}
\dim \Phi_{a}/\PGL_{3}&\leq 3(n-1) + h^{0}(\O_{\P^{2}}(a)) - 1 + h^{0}(\P^{n+1}, \O_{\P^{n+1}}(d)) - h^{0}(C, \O_{C}(d)) - 1 \\
&=N_d+3n-4-a(d-a).
\end{align*}
From this we see that $\dim \Phi_{a}/\PGL_{3} < N_{d}$ if $a(d-a)>3n-4$, which is true when $\frac{d}{2} \geq a \geq 3$ (by symmetry it is enough to check this range) and $d \geq 2n$.
\end{proof}

\noindent By further slicing with general hyperplanes, it follows that the intersection of a general hypersurface $X \subset \P^{n+1}$ of degree $d \geq 2n$ with \textit{any} linear subspace of dimension $\geq 2$ is reduced and irreducible.

The argument above can be generalized to the case of quadric surfaces:

\begin{proposition}\label{prop:incidenceQuadricSurf}
On a general hypersurface $X \cap \P^{n+1}$ of degree $d \geq 2n$, the intersection $X \cap Q$ with every quadric surface $Q$ is an integral curve.
\end{proposition}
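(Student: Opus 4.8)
The plan is to run the same incidence-correspondence argument as in the proof of Proposition~\ref{prop:incidence2plane}, replacing the family of $2$-planes by the family of quadric surfaces. Throughout we may take $Q$ to be irreducible, since a reducible quadric is a union of two planes and then $X\cap Q$ is visibly reducible. I would treat first the main case where $Q$ is smooth, so that $Q\cong\P^1\times\P^1$ with $\operatorname{Pic}(Q)=\Z^2$ and $\O_Q(1)=\O_Q(1,1)$; then $X\cap Q$ is an effective divisor of bidegree $(d,d)$ and the goal is to show it is integral. The case $n=2$ is separate: there $Q\subset\P^3$ is itself a hypersurface, and one argues as for $2$-planes, using Noether--Lefschetz to see that on a very general $X$ any component of a nontrivial splitting of $X\cap Q$ would be a hyperplane section $X\cap H$, a plane curve of degree $d>2$, which cannot lie on the irreducible quadric $Q$. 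So assume $n\ge 3$.

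First I would pin down the possible bidegrees of the components. Since $X$ contains no lines for $d\ge 2n$, and the fibers of the two rulings are exactly the curves of bidegree $(a,0)$ and $(0,b)$, no irreducible component of $X\cap Q$ can have a vanishing coordinate, so every component has bidegree $(p,q)$ with $p,q\ge 1$. Since $n\ge 3$, a very general $X$ contains no rational curves \cite{Voisin96}, and an irreducible curve of bidegree $(p,q)$ on $Q$ has arithmetic genus $(p-1)(q-1)$; hence in fact $p,q\ge 2$ for every component. Finally, if $X\cap Q$ fails to be integral, then writing it as $m_1C+(\text{rest})$ for an irreducible component $C$, the remainder is a nonzero effective divisor all of whose components again have both coordinates $\ge 2$, which forces the bidegree $(p,q)$ of $C$ to satisfy $2\le p,q\le d-2$.

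Next I would set up, for each such $(p,q)$, the incidence variety
\[
\Phi_{p,q}=\{(X,Q,C): Q\subset\P^{n+1}\text{ a smooth quadric surface},\ C\subset Q\text{ irreducible of bidegree }(p,q),\ C\subseteq X\},
\]
with its two projections to $\P^{N_d}$ and to the parameter space of pairs $(Q,C)$. The base fibers over the choice of a $3$-plane $\Lambda\cong\P^3$ (the Grassmannian of dimension $4(n-2)$), then a smooth quadric $Q\subset\Lambda$ (dimension $9$), then a curve $C\in|\O_Q(p,q)|$ (dimension $(p+1)(q+1)-1$). The fiber of the second projection over $(Q,C)$ is $\P H^0(\P^{n+1},\I_C(d))$, and exactly as in Proposition~\ref{prop:incidence2plane} the restriction $H^0(\P^{n+1},\O(d))\to H^0(C,\O_C(d))$ is surjective: it factors as $H^0(\O_{\P^{n+1}}(d))\twoheadrightarrow H^0(\O_\Lambda(d))\twoheadrightarrow H^0(\O_Q(d))\twoheadrightarrow H^0(\O_C(d))$, the last two surjections following from $H^1(\O_{\P^3}(d-2))=0$ and $H^1(\O_Q(d-p,d-q))=0$ (here $d-p,d-q\ge 2$). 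Hence the relative dimension of the second projection is $h^0(\O_{\P^{n+1}}(d))-h^0(\O_C(d))-1$, and a short computation using $h^0(\O_C(d))=(d+1)(p+q)-pq$ yields
\[
\dim\Phi_{p,q}=N_d+4n+2pq+1-d(p+q).
\]

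Finally I would check the numerics. We have $\dim\Phi_{p,q}<N_d$ precisely when $d(p+q)>2pq+4n+1$, and over the range $2\le p,q\le d-2$ the left-hand side minus the right-hand side is linear along horizontal and vertical slices, hence minimized at a corner, where it equals $4d-8$; since $d\ge 2n$ and $n\ge 3$ give $4d-8\ge 8n-8>4n+1$, the inequality holds throughout. As there are finitely many bidegrees, $\bigcup_{p,q}\Phi_{p,q}$ cannot dominate $\P^{N_d}$, so a very general $X$ has $X\cap Q$ integral for every smooth $Q$, the passage to a general $X$ being the usual one via finiteness of the relevant Hilbert schemes. The main obstacle is that the estimate is genuinely tight at the boundary $d=2n$: the dimension count alone fails for the conic $(p,q)=(1,1)$, which is exactly why one must import Voisin's theorem to excise rational (hence $p=1$ or $q=1$) components rather than bounding them directly. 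A secondary point to complete is the degenerate irreducible quadrics, namely the rank-$3$ cones: these form a family of one fewer dimension, carrying Weil divisors measured by a single ruling class, and an entirely analogous count, with strictly more room, disposes of them.
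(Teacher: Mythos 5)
Your treatment of the smooth quadric is essentially the paper's own argument: the same incidence correspondence over pairs $(Q,C)$, the same use of the no-lines and no-rational-curves facts to force every component to have bidegree $(p,q)$ with $2\le p,q\le d-2$, the same surjectivity of restriction to compute the fibre dimension, and the same final inequality $d(p+q)-2pq>4n+1$ checked at the corners of the rectangle (your count $N_d+4n+2pq+1-d(p+q)$ agrees exactly with the paper's $N_d+4n+h^0(\O(a,b))-h^0(\O_C(d))$). Your sketch for $n=2$ via Noether--Lefschetz is also fine and is all the paper says there.

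The genuine gap is the quadric cone, which you dispose of in one sentence as ``an entirely analogous count, with strictly more room.'' Neither half of that claim is justified, and the paper in fact spends roughly half of its proof on this case. To run the count on the cone you must pass to the minimal resolution $\F_2=\P(\O_{\P^1}\oplus\O_{\P^1}(2))$, where curve classes are $aE+bF$ (the integral ones satisfying $b\ge 2a$ or $C=E$), and the section counts are not the bilinear expression $(a+1)(b+1)$ of the smooth case: $h^0(aE+bF)$ equals $(a+1)(b-a+1)$ when $b\ge 2a$ but degenerates to a function of $b$ alone ($\approx (b+2)^2/4$) when $b<2a$ --- and the residual class $(d-a)E+(2d-b)F$ of a component can land in either regime, so the estimate must be done separately. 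When one does it (as in the paper's inequality $h^0(dE+2dF)-h^0(aE+bF)-h^0((d-a)E+(2d-b)F)>4n+1$), the margin works out to $4d-9$ versus $4n+1$, i.e.\ essentially the \emph{same} tightness as the smooth case, not strictly more room; the one dimension saved because singular quadrics form an $8$-dimensional family is eaten by the changed geometry of the linear systems. So the cone case is a real computation that your proposal asserts rather than performs, and the heuristic you give for skipping it is false as stated.
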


\begin{proof}
By a dimension count, $X$ does not contain any quadric surfaces. As mentioned before, the case $n = 2$ follows from the Noether-Lefschetz theorem. Assuming $n \geq 3$, we will estimate the dimension of the space of hypersurfaces for which the intersection with a quadric surface splits off a component.
    
Let us first work out the case where $Q \cong \P^{1} \times \P^{1}$ is a smooth quadric surface. For two nonnegative integers $a$ and $b$, consider the incidence correspondence:
\begin{center}
\begin{tikzcd}
\Phi_{a,b} \arrow[r, phantom, "\coloneqq"] &[-0.2in] {\{ (X_{d} , f \colon \P^{1} \times \P^{1} \rightarrow \P^{n+1}, C) \in \P^{N_{d}} \times \Lin(\P^{1}\times \P^{1}, \P^{n+1}) \times |\O_{\P^{1}\times\P^{1}}(a,b)| \colon f(C) \subset X_{d} \}}.
\end{tikzcd}
\end{center}
Here, an element of $\Lin(\P^{1}\times \P^{1}, \P^{n+1})$ consists of the Segre embedding $\P^{1} \times \P^{1} \hookrightarrow \P^{3}$ composed with a linear embedding $\P^{3} \hookrightarrow \P^{n+1}$. There is an action of $\Aut(\P^{1} \times \P^{1})$ on $\Phi_{a, b}$ which is defined by $\phi \cdot (X, f, C) = (X, f \circ \phi, \phi^{-1}(C))$. Since we are interested in the orbit space, we will pass to the quotient $\Phi_{a,b}/\Aut(\P^{1} \times \P^{1})$, which then admits the following projection maps
\begin{center}
\begin{tikzcd}[column sep=small]
    & \Phi_{a,b}/\Aut(\P^{1} \times \P^{1}) \arrow[ld, swap, "\pi_{1}"] \arrow[rd, "\pi_{2}"] & \\
    \P^{N_{d}} & & {(\Lin(\P^{1}\times\P^{1}, \P^{n+1}) \times |\O_{\P^{1}\times\P^{1}}(a,b)|)/\Aut(\P^{1} \times \P^{1})}
\end{tikzcd}
\end{center}

We wish to show that if $(a,b)\neq(d,d)$, then $\Phi_{a,b}/\Aut(\P^{1} \times \P^{1})$ cannot dominate $\P^{N_{d}}$. When $n \geq 3$, a very general $X$ does not contain rational curves \cite{Voisin96}, so we may assume by symmetry that $d-2\geq b \geq a \geq 2$. It suffices to show that in this case, $\dim \Phi_{a,b}/\Aut(\P^{1} \times \P^{1}) < N_{d}$.

 Fixing a point in the image of $\pi_{2}$ amounts to fixing a quadric $Q$ and a curve $C \subset \P^{1} \times \P^{1}$ of bi-degree $(a,b)$. The fiber over such a point is isomorphic to the projectivization of $H^{0}(\P^{n+1}, \I_{C}(d))$. The latter group is the kernel of the restriction map $H^{0}(\P^{n+1}, \O_{\P^{n+1}}(d)) \twoheadrightarrow H^{0}(Q, \O_{Q}(d)) \twoheadrightarrow H^{0}(C, \O_{C}(d))$, so $\pi_{2}$ has relative dimension equal to $h^{0}(\P^{n+1}, \O(d)) - h^{0}(C, \O(d)) - 1$. On the other hand, the image of $\pi_{2}$ has dimension at most
\begin{align*}
\dim [(\Lin(\P^{1} \times \P^{1}, \P^{n+1}) \times |\O_{\P^{1}\times\P^{1}}(a,b)|)/\Aut(\P^{1} \times \P^{1})] = 4(n-2) + 9 + h^{0}(\O_{\P^{1}\times\P^{1}}(a,b)) - 1.
\end{align*}
It follows that we have
\begin{align*}
\dim \Phi_{a,b}/\Aut(\P^{1}\times\P^{1}) &\leq 4(n-2) + 9 + h^{0}(\O_{\P^{1}\times\P^{1}}(a,b)) - 1 + h^{0}(\P^{n+1}, \O(d)) - h^{0}(C, \O(d)) - 1 \\
&= N_d + 4n + h^{0}(\O_{\P^{1}\times\P^{1}}(a,b)) - h^{0}(C, \O_{C}(d))
\end{align*}
Since $h^{0}(C, \O_{C}(d)) = h^{0}(\P^{1} \times \P^{1}, \O_{\P^{1} \times \P^{1}}(d,d)) - h^{0}(\P^{1} \times \P^{1}, \O_{\P^{1} \times \P^{1}}(d-a,d-b))$, we see that $\dim \Phi_{a,b}/\Aut(\P^{1}\times\P^{1}) < N_{d}$ if
\[ h^{0}(\O_{\P^{1}\times\P^{1}}(d,d)) - h^{0}(\O_{\P^{1}\times\P^{1}}(d-a,d-b)) - h^{0}(\O_{\P^{1}\times\P^{1}}(a,b)) -4n > 0. \]
This simplifies to checking that $(a+b)d - 2ab - 1 - 4n > 0$, which is true when $2 \leq a \leq b \leq d-2$, $d \geq 2n$, and $n \geq 3$.

Let us now consider the case where $Q$ is a singular quadric cone (spanning $\P^{3}$). The blow-up of $Q$ at the singular point is a Hirzebruch surface $\pi \colon \F_{2} \rightarrow \P^{1}$ which satisfies $\NS(\F_{2}) = \Z \langle F, E \rangle$, where $F$ is the class of a fiber of $\pi$ and $E$ is the unique section with negative self-intersection. These satisfy the relations $F^{2} = 0$, $F \cdot E = 1$, and $E^{2} = -2$. The morphism $\F_{2} \rightarrow Q \subset \P^{3}$ is then given by the linear series $|E + 2F|$. We will now recall some facts:
\begin{enumerate}[wide, labelwidth=0pt, labelindent=0pt, label=(\textbf{\roman*})]
\item The nef cone of $\F_{2}$ is given by $\Nef(\F_{2}) = \R_{\geq 0} \cdot F + \R_{\geq 0} \cdot (E + 2F)$.
\item If $C \subset \F_{2}$ is an integral curve, then either $C = E$ or $C \in \Nef(\F_{n})$.
\item If $aE + bF$ is a line bundle on $\F_{2}$ with $b \geq 2a\geq 0$, then $E + 2F = \O_{\F_{2}}(1)$ under the identification $\F_{2} \cong \P(\O_{\P^{1}} \oplus \O_{\P^{1}}(2))$ so we can compute global sections of $aE + bF$ by pushing forward to $\P^{1}$:
\begin{align*}
h^{0}(\F_{2}, aE + bF) &= h^{0}(\F_{2}, a(E + 2F) + (b-2a)F) \\
&= h^{0}(\P^{1}, \Sym^{a}(\O_{\P^{1}} \oplus \O_{\P^{1}}(2)) \otimes \O_{\P^{1}}(b-2a)) \\
&= \sum_{k=0}^{a} h^{0}(\P^{1}, \O_{\P^{1}}(b-2a+2k)) \\
&= (a+1)(b-a + 1), \qquad \text{when } b \geq 2a.
\end{align*}
On the other hand, if $0 \leq b \leq 2a$ and $a\geq 0$, then this sum actually only depends on $b$:
\begin{align*}
h^{0}(\F_{2}, aE + bF) &= \sum_{k=\lceil \frac{2a-b}{2} \rceil}^{a} h^{0}(\P^{1}, \O_{\P^{1}}(b-2a+2k)) \\
&= \begin{cases} 1 + 3 + \cdots + (b+1) & \text{if $b$ is even,} \\
2 + 4 + \cdots + (b+1) & \text{if $b$ is odd;} \end{cases} \\
&= \begin{cases} \frac{(b+2)^{2}}{4} & \text{if $b$ is even,} \\
\frac{(b+1)(b+3)}{4} & \text{if $b$ is odd}. \end{cases}
\end{align*}
This makes sense in light of the fact that $h^{0}(\F_{2}, (a-1)E + bF) = h^{0}(\F_{2}, aE + bF)$ when $b < 2a$.
\end{enumerate}

Now following the same strategy as before, consider the incidence:
\[ \Psi_{a,b} \coloneqq \{ (X_{d}, f \colon \F_{2} \rightarrow \P^{n+1}, C) \in \P^{N_{d}} \times \mathcal{V} \times |\O_{\F_{2}}(aE+bF)| \colon f(C) \subset X_{d} \}. \]
Here, $\mathcal{V}$ is the space of maps that factor through a singular quadric $Q \subset \P^{n+1}$, and note that the space of singular quadrics in $\P^{3}$ has dimension 8. As before, it suffices to show that $\dim \Psi_{a,b}/\Aut(\F_{2}) < N_{d}$. A nearly identical computation as the one above reduces to checking that
\begin{equation}\label{eq:propQuadricCheck}
h^{0}(\O_{\F_{2}}(dE + 2dF)) - h^{0}(\O_{\F_{2}}(aE + bF)) - h^{0}(\O_{\F_{2}}((d-a)E + (2d-b)F)) - 4n - 1 > 0
\end{equation}
when $2(d-2)\geq b \geq 2a$ and $a \geq 2$. Our assumptions on $a$ and $b$ follow from the description of the effective classes in (ii) together with the fact that we are only interested in breaking off integral non-rational curves $C$. Using (iii), we can write
\begin{align*}
    &h^{0}(\O_{\F_{2}}(dE + 2dF)) - h^{0}(\O_{\F_{2}}(aE + bF)) - h^{0}(\O_{\F_{2}}((d-a)E + (2d-b)F))\\
    &= (d+1)^2-(a+1)(b-a+1)-h^{0}(\O_{\F_{2}}((d-a)E + (2d-b)F))\\
    &\geq (d+1)^2-(a+1)(b-a+1)-(2d-b+2)^2/4\\
    &\geq (d+1)^2-(\frac{b+2}{2})^2-(\frac{2d-b+2}{2})^2\\
    &\geq (d+1)^2-3^2-(d-1)^2\\
    &=4d-9,
\end{align*}
which implies \eqref{eq:propQuadricCheck} since $d\geq 2n\geq 6$.
\end{proof}

An ingredient that we will frequently use later is Castelnuovo's bound for the arithmetic genus of a non-degenerate curve in projective space:

\begin{proposition}[Castelnuovo]\label{prop:Castelnuovo'sThm}
Fix $r \geq 3$ and let $C \subset \P^{r}$ be a non-degenerate integral curve of degree $\delta$. If we set $M \coloneqq \lfloor \frac{\delta-1}{r-1} \rfloor$, then
\[ p_{a}(C) \leq M \cdot \left( \delta - \frac{M+1}{2} (r-1) - 1 \right). \]
\end{proposition}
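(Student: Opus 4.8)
The plan is to reduce the problem to a statement about a general hyperplane section of $C$ and the number of conditions it imposes on hypersurfaces of each degree. Let $H \subset \P^r$ be a general hyperplane and set $\Gamma \coloneqq C \cap H$, a collection of $\delta$ points in $H \cong \P^{r-1}$. Writing $R = \bigoplus_k R_k$ for the homogeneous coordinate ring $S/I_C$ of $C$, integrality of $C$ means $R$ is a domain, so multiplication by a general linear form $\ell$ (cutting out $H$) is injective on each graded piece. Hence $\dim (R/\ell R)_k = H_C(k) - H_C(k-1)$, where $H_C$ is the Hilbert function of $C$. Since $R/\ell R$ surjects onto the homogeneous coordinate ring of $\Gamma$, I obtain the fundamental inequality $H_C(k) - H_C(k-1) \ge H_\Gamma(k)$, where $H_\Gamma(k)$ denotes the number of conditions $\Gamma$ imposes on degree-$k$ forms on $\P^{r-1}$. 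Summing from $1$ to $k$ and using $H_C(0)=1$ gives $H_C(k) \ge 1 + \sum_{j=1}^k H_\Gamma(j)$; for $k \gg 0$ the left side equals the Hilbert polynomial value $k\delta + 1 - p_a(C)$, so a lower bound on the right translates into the desired upper bound on $p_a(C)$.

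The crux is therefore a lower bound on $H_\Gamma(k)$, which is Castelnuovo's lemma: a set of $\delta$ points in \emph{general position} in $\P^{r-1}$ imposes at least $\min(\delta, k(r-1)+1)$ conditions on hypersurfaces of degree $k$. To prove this I would show that any subset of $N \coloneqq \min(\delta, k(r-1)+1)$ of the points imposes independent conditions, which amounts to producing, for each point $p_i$ in the subset, a degree-$k$ hypersurface passing through all the others but not through $p_i$. Partition the remaining $N-1 \le k(r-1)$ points into at most $k$ groups of size at most $r-1$; by general position each group of $\le r-1$ points spans a hyperplane of $\P^{r-1}$ that can be chosen to avoid $p_i$, and the product of these $\le k$ linear forms is the sought hypersurface.

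The input that makes the previous step work is that a general hyperplane section of an integral non-degenerate curve is in general position, i.e. any $r-1$ of the points of $\Gamma$ are linearly independent. This is the general (uniform) position lemma, and I would invoke it as the main nontrivial ingredient; its proof uses the irreducibility of $C$ together with a monodromy/irreducibility argument for the universal hyperplane section. This is the step I expect to be the main obstacle, since the Hilbert-function bookkeeping and the combinatorial grouping argument are elementary once general position is granted.

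Finally, I would carry out the numerical computation. With $M = \lfloor \frac{\delta-1}{r-1}\rfloor$ one checks that $j(r-1)+1 \le \delta$ for $1 \le j \le M$ and $j(r-1)+1 \ge \delta$ for $j \ge M+1$, so Castelnuovo's lemma gives $H_\Gamma(j) \ge j(r-1)+1$ for $j \le M$ and $H_\Gamma(j) = \delta$ for $j > M$. Substituting into $k\delta - p_a(C) \ge \sum_{j=1}^k H_\Gamma(j)$ for large $k$, the terms with $j > M$ cancel against $k\delta$, and summing the arithmetic progression $\sum_{j=1}^M \bigl(j(r-1)+1\bigr)$ yields exactly $p_a(C) \le M\left(\delta - \frac{M+1}{2}(r-1) - 1\right)$.
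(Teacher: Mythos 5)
Your argument is correct: it is the standard classical proof of Castelnuovo's bound, proceeding via the Hilbert function of the coordinate ring, the uniform (general) position lemma for a general hyperplane section of an integral non-degenerate curve, and Castelnuovo's counting lemma that $\delta$ points in general position in $\P^{r-1}$ impose at least $\min(\delta, k(r-1)+1)$ conditions on degree-$k$ forms; the final summation does yield exactly $M\left(\delta - \frac{M+1}{2}(r-1) - 1\right)$, and working with the Hilbert polynomial of $S/I_C$ correctly produces the \emph{arithmetic} genus for a possibly singular integral curve. The paper states this proposition as a classical result and gives no proof of its own, so there is nothing to compare against; your writeup is a faithful rendition of the textbook argument, with the uniform position lemma correctly identified as the one genuinely nontrivial input.
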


\noindent Setting the right-hand side equal to $f(M)$, it is straightforward to check that the function $f(m)$ is maximized when $m = \frac{\delta-1}{r-1} - \frac{1}{2}$. Thus, for any $r \geq 3$ we have
\begin{equation}\label{eq:anyr>=3Castelnuovo}
p_{a}(C) \leq f\left( \frac{\delta-1}{r-1} - \frac{1}{2} \right) = \frac{1}{2(r-1)} \delta^{2} - \frac{(r+1)}{2(r-1)} \delta + \frac{(r+1)^{2}}{8(r-1)}.
\end{equation}

When $r$ is even, the expression $\frac{\delta-1}{r-1} - \frac{1}{2}$ can never be an integer so there is a slight improvement: $f(m)$ is maximized when $m = \frac{\delta-1}{r-1} - \frac{r}{2(r-1)}$ (or equivalently when $m = \frac{\delta-1}{r-1} - \frac{r-2}{2(r-1)}$). For future reference, we will write out this inequality for $r = 4$:
\begin{equation}\label{eq:r=4Castelnuovo}
p_{a}(C) \leq f\left( \frac{\delta-1}{r-1} - \frac{r}{2(r-1)} \right) \Big|_{r=4} = \frac{1}{6} \delta^{2} - \frac{5}{6} \delta + 1.
\end{equation}
For $r = 3$, one can show that
\begin{equation}\label{eq:r=3Castelnuovo}
p_{a}(C) \leq \frac{1}{4} \delta^{2} - \delta + 1.
\end{equation}

We will also need a number of refinements of Castelnuovo's theorem. For instance, Halphen \cite{Halphen21} proved that a non-degenerate curve $C \subset \P^{3}$ of degree $\delta$ which lies on a cubic surface but not a quadric surface satisfies
\begin{align}\label{eq:HalphenCubicSurf}
p_{a}(C) \leq \begin{cases}
    \frac{1}{6} \delta^{2} - \frac{1}{2} \delta + 1 & \text{ if } \delta \equiv 0 \pmod{3} \\
    \frac{1}{6} \delta^{2} - \frac{1}{2} \delta + \frac{1}{3} & \text{ if } \delta \not\equiv 0 \pmod{3} \\
\end{cases}
\end{align}
More generally, Gruson--Peskine \cite{GP78} showed that if $C \subset \P^{3}$ is a non-degenerate curve of degree $\delta > s(s-1)$ which is not contained in a surface of degree $< s$, then $p_{a}(C) \leq 1 + \frac{\delta}{2}\left( s + \frac{\delta}{s} - 4 \right)$. (In fact, they prove a slightly stronger bound, but the above weaker inequality is simpler to state and suffice for our purposes). Evaluating this with $s = 3$ and $4$, we see that if $C \subset \P^{3}$ is a non-degenerate curve not contained in a surface of degree $\leq 3$, then
\begin{align}\label{eq:GrusonPeskineNotInQuadricCubicSurf}
p_{a}(C) \leq \begin{cases}
\lfloor \frac{1}{8} \delta^{2} \rfloor + 1 & \text{ if } \delta > 12; \\ \lfloor \frac{1}{6} \delta^{2} - \frac{1}{2} \delta \rfloor + 1 & \text{ if } 12 \geq \delta > 6.
\end{cases}
\end{align}
Finally, the main theorem of Harris \cite{Harris80} says that if $C$ is contained in an irreducible surface $S \subset \P^{3}$ of degree $k$, then
    \begin{equation}\label{eq:r=3Harris}
    p_{a}(C) \leq \begin{cases} \pi(\delta, k) = \frac{\delta^{2}}{2k} + \frac{1}{2} (k-4) \delta + 1 - \frac{\varepsilon}{2} (k-\varepsilon-1+\frac{\varepsilon}{k}) & \text{if } \delta > k(k-1), \\ \pi(\delta, \left\lfloor \frac{\delta-1}{k} \right\rfloor + 1) & \text{if } \delta \leq k(k-1); \end{cases}
    \end{equation}
    where $0 \leq \varepsilon \leq k-1$ is such that $\delta+\epsilon$ is a multiple of $k$. Note that $-\frac{\varepsilon}{2}(k-\varepsilon-1+\frac{\varepsilon}{k}) \leq 0$, so
    \[ \pi(\delta, k) \leq \frac{\delta^{2}}{2k} + \frac{1}{2} (k-4)\delta + 1 \eqqcolon R(\delta, k). \]

\section{Incidence correspondences}

In this section, we will set up a number of incidence correspondences and use these to deduce various lower bounds for the arithmetic genus of a low degree curve on a general hypersurface $X \subset \P^{n+1}$. A key ingredient in this estimate is a theorem of Gruson--Lazarsfeld--Peskine \cite{GLP83}, which says that hypersurfaces of degree $d$ trace out a complete linear system on the curves we are interested in:

\begin{lemma}\label{lem:IncidenceFiberDim}
    Fix a positive integer $d$ and let $C \subset \P^{n+1}$ be an integral curve of degree $\delta \leq d+2$. Suppose that $\Span(C) \cong \P^{r}$ for some positive integer $r \geq 3$. Then
    \[ h^{0}(\P^{n+1}, \I_{C}(d)) = h^{0}(\P^{n+1}, \O(d)) - (d \delta + 1 - p_{a}(C)). \]
\end{lemma}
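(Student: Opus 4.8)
The plan is to read the formula off the ideal-sheaf sequence
\[ 0 \to \I_C(d) \to \O_{\P^{n+1}}(d) \to \O_C(d) \to 0 \]
on $\P^{n+1}$. Passing to global sections, the desired identity is equivalent to the two statements: (a) the restriction map $\rho \colon H^0(\P^{n+1}, \O(d)) \to H^0(C, \O_C(d))$ is surjective, and (b) $h^0(C, \O_C(d)) = d\delta + 1 - p_a(C)$. Indeed, (a) gives $h^0(\I_C(d)) = h^0(\O(d)) - h^0(\O_C(d))$ directly from the long exact sequence, and (b) then rewrites $h^0(\O_C(d))$ in the stated shape. So the whole proof reduces to these two inputs.

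For (a), I would factor $\rho$ through the span $\Lambda \coloneqq \Span(C) \cong \P^r$: since restriction to a linear subspace is always surjective, $\rho$ is surjective as soon as $H^0(\Lambda, \O_\Lambda(d)) \to H^0(C, \O_C(d))$ is, i.e. as soon as $H^1(\Lambda, \I_{C/\Lambda}(d)) = 0$. This is exactly where Gruson--Lazarsfeld--Peskine enters: a non-degenerate integral curve of degree $\delta$ in $\P^r$ satisfies $H^1(\P^r, \I_{C/\P^r}(k)) = 0$ for all $k \geq \delta - r + 1$ (equivalently, $\I_{C/\P^r}$ is $(\delta - r + 2)$-regular). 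The only thing to verify is the numerical inequality $d \geq \delta - r + 1$, which holds because $\delta \leq d + 2$ and $r \geq 3$.

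For (b), note that $\O_C(d)$ has degree $d\delta$ on $C$, so Riemann--Roch on the (possibly singular) integral curve $C$ gives $\chi(\O_C(d)) = d\delta + \chi(\O_C) = d\delta + 1 - p_a(C)$. It therefore remains to show that $\O_C(d)$ is non-special, i.e. $h^1(C, \O_C(d)) = 0$. Running the ideal-sheaf sequence inside $\Lambda = \P^r$ and using $H^1(\P^r, \O(d)) = 0$, one obtains an injection $H^1(C, \O_C(d)) \hookrightarrow H^2(\P^r, \I_{C/\P^r}(d))$, and the right-hand group vanishes by the same GLP regularity, which yields $H^2(\I_{C/\P^r}(k)) = 0$ for $k \geq \delta - r$. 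Since $\delta - r \leq d - 1$, this applies at $k = d$, so $h^1(\O_C(d)) = 0$ and (b) follows. Combining (a) and (b) gives the stated formula.

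The entire content of the lemma is concentrated in the single application of GLP; everything else is formal manipulation of the ideal-sheaf sequence together with Riemann--Roch. Accordingly, the point to get right is that GLP applies to possibly singular integral non-degenerate curves (which is why the hypotheses only require $C$ integral and $r \geq 3$), and that the degree bound $\delta \leq d + 2$ is precisely strong enough to push $d$ past the regularity thresholds $\delta - r + 1$ and $\delta - r$. Once these numerics are checked there is no remaining obstacle, so I expect the \emph{only} genuinely nontrivial step to be the invocation of GLP and its regularity consequences.
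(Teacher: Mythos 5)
Your proposal is correct and follows essentially the same route as the paper: reduce via the ideal-sheaf sequence to surjectivity of restriction plus nonspecialty of $\O_C(d)$, and obtain both from the Gruson--Lazarsfeld--Peskine $(\delta+2-r)$-regularity of $\I_{C/\P^r}$ together with Riemann--Roch, with the same numerical checks $d \geq \delta - r + 1$ and $d \geq \delta - r$ coming from $\delta \leq d+2$ and $r \geq 3$. No gaps.
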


\begin{proof}

By assumption, $C \subset \P^{r}$ is non-degenerate and $\deg C = \delta$. Since $d \geq \delta + 1 - r$ (since $r \geq 3$), by the main theorem of \cite{GLP83} we get a surjection $H^{0}(\P^{r}, \O(d)) \twoheadrightarrow H^{0}(C, \O_{C}(d))$. By combining this with the obvious surjection $H^{0}(\P^{n+1}, \O(d)) \twoheadrightarrow H^{0}(\P^{r}, \O(d))$, we have an exact sequence
\begin{equation}\label{eq:exactseqI_C(d)}
0 \rightarrow H^{0}(\P^{n+1}, \I_{C}(d)) \rightarrow H^{0}(\P^{n+1}, \O(d)) \rightarrow H^{0}(C, \O_{C}(d)) \rightarrow 0.
\end{equation}
Thus, we reduce our problem to computing the quantity $h^{0}(C, \O_{C}(d))$. Before doing so, it will be useful to introduce some additional terminology. A coherent sheaf $\mathcal{F}$ on $\P^{r}$ is said to be \textit{$n$-regular} if $H^{i}(\P^{r}, \mathcal{F}(n-i)) = 0$ for $i > 0$. By Mumford's theorem, this is equivalent to $\mathcal{F}$ being $(n+k)$-regular for all $k \geq 0$. We will say that a curve $C \subset \P^{r}$ is $n$-regular if its ideal sheaf $\I_{C}$ is.

Recall from \cite[Theorem 1.1]{GLP83} that a non-degenerate curve $C \subset \P^{r}$ of degree $\delta$ is $(\delta+2-r)$-regular. Since $r \geq 3$, by our assumption we see that $H^{1}(\P^{r}, \I_{C} (d)) = 0 = H^{2}(\P^{r}, \I_{C} (d))$ for $d \geq \delta - 2$. Let us apply this to the long exact sequence on cohomology associated to the ideal sheaf sequence for $C \subset \P^{r}$:
\[ 0 = H^{1}(\P^{r}, \I_{C}(d)) \rightarrow H^{1}(\P^{r}, \O(d)) \rightarrow H^{1}(C, \O_{C}(d)) \rightarrow H^{2}(\P^{r}, \I_{C}(d)) = 0. \]
This implies that $H^{1}(C, \O_{C}(d)) \cong H^{1}(\P^{r}, \O(d)) = 0$, and Riemann--Roch gives
\begin{equation}\label{eq:cohomologyO_C(d)}
h^{0}(C, \O_{C}(d)) = \chi(C, \O_{C}(d)) = d \delta + 1 - p_{a}(C).
\end{equation}
Putting together \eqref{eq:exactseqI_C(d)} and \eqref{eq:cohomologyO_C(d)} gives the desired result.
\end{proof}

We will use the previous lemma to give a lower bound on the arithmetic genus of a low degree curve on a general hypersurface.

\begin{proposition}\label{prop:ArithmeticGenusIncidence}
Let $X \subset \P^{n+1}$ be a general hypersurface of degree $d$ and let $C \subset X$ be an integral curve of degree $\delta \leq d+2$. Let $f \colon C' \rightarrow C$ be the normalization and suppose that $\dim \Span(C) \leq r$. Then
\begin{equation}\label{eq:ArithmeticGenusFullIncidence}
p_{a}(C) \geq d \delta - (r+1)(n+1-r+h^{0}(f^{\ast}\O_{C}(1))) - 4 p_{g}(C) + 5
\end{equation}
\end{proposition}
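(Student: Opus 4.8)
The plan is to realize the containment $C \subseteq X$ inside an incidence correspondence and to read off the genus bound from the fact that, for very general $X$, this correspondence must dominate $\P^{N_{d}}$. Write $g = p_{g}(C)$, $h = h^{0}(f^{\ast}\O_{C}(1))$, and let $\mathcal{H}$ be the parameter space of integral curves in $\P^{n+1}$ of degree $\delta$, geometric genus $g$, and $\dim\Span \le r$, realized as birational images of maps from smooth genus-$g$ curves; let $\mathcal{H}_{0} \subseteq \mathcal{H}$ be the irreducible component containing $[C]$. Setting
\[ \Phi = \{(X, [C]) \in \P^{N_{d}} \times \mathcal{H}_{0} : C \subseteq X\}, \]
with projections $\pi_{1}$ to $\P^{N_{d}}$ and $\pi_{2}$ to $\mathcal{H}_{0}$, the very generality of $X$ forces $\pi_{1}$ to be dominant (otherwise $X$ would lie in one of the countably many proper subvarieties excluded by ``very general''), so $\dim\Phi \ge N_{d}$.

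The fiber of $\pi_{2}$ over $[C]$ is the projectivized linear system $\P H^{0}(\P^{n+1}, \I_{C}(d))$, and this is where \Cref{lem:IncidenceFiberDim} is decisive: since $\delta \le d+2$ and $r \ge 3$, it computes $h^{0}(\I_{C}(d))$ exactly, giving
\[ \dim \pi_{2}^{-1}([C]) = h^{0}(\I_{C}(d)) - 1 = N_{d} - d\delta - 1 + p_{a}(C). \]
It is essential that this be an equality rather than an inequality — equivalently, that degree-$d$ hypersurfaces trace out a complete linear series on $C$ — for only then does the arithmetic genus enter as a genuine lower bound; an inequality would point the wrong way. Because $h^{0}(\I_{C'}(d))$ is upper semicontinuous in $[C']$, the generic fiber of $\pi_{2}$ has dimension at most $N_{d} - d\delta - 1 + p_{a}(C)$, whence $\dim\Phi \le \dim\mathcal{H}_{0} + N_{d} - d\delta - 1 + p_{a}(C)$.

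It remains to bound $\dim\mathcal{H}_{0}$ by $B \coloneqq 4g - 4 + (r+1)(n+1-r+h)$. I would parametrize a curve of this type through its normalization and split the datum into three pieces: (i) the abstract source curve $C'$, contributing $\dim\mathcal{M}_{g} - \dim\Aut(C') = 3g-3$ to the space of embedded curves (this identity holds uniformly, including $g = 0, 1$); (ii) the line bundle $L = f^{\ast}\O_{C}(1) \in \operatorname{Pic}^{\delta}(C')$, contributing $g$; and (iii) the morphism itself, specified by an ordered $(n+2)$-tuple of sections of $L$ spanning a subspace of dimension $\le r+1$, taken up to a common scalar. Writing $h = h^{0}(L)$ and choosing first the span inside $H^{0}(L)$ and then a surjection of $\C^{n+2}$ onto it, piece (iii) contributes
\[ (r+1)(h-r-1) + (n+2)(r+1) - 1 = (r+1)(n+1-r+h) - 1. \]
Summing (i)--(iii) gives $\dim\mathcal{H}_{0} \le B$; upper semicontinuity of $h^{0}(f^{\ast}\O(1))$ ensures a generic member has $h^{0} \le h$, so the value $h$ attached to $C$ is a legitimate bound in (iii). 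Combining with $\dim\Phi \ge N_{d}$ and the fiber estimate yields $p_{a}(C) \ge d\delta + 1 - B$, which is exactly \eqref{eq:ArithmeticGenusFullIncidence}.

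The main obstacle is getting piece (iii) — and hence the precise coefficient $(r+1)(n+1-r+h)$ — right. One must count the morphism as a surjection $\C^{n+2} \twoheadrightarrow \C^{r+1}$ onto the span of its coordinate sections, which simultaneously records the $r$-plane $\Span(C) \subset \P^{n+1}$ and the identification of $C$'s linear series with the ambient coordinates; replacing this by the bare Grassmannian of $r$-planes would drop a factor of $\dim\PGL_{r+1}$ and produce the wrong (too strong) expression $(r+1)(n-2r+h)$. The secondary delicate point is the semicontinuity bookkeeping that lets one use the specific invariants $g$ and $h$ of $C$ — together with the exact fiber dimension at $C$ supplied by \Cref{lem:IncidenceFiberDim} — in place of the generic values along $\mathcal{H}_{0}$.
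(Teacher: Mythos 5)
Your proposal is correct and follows essentially the same route as the paper: the same incidence correspondence, domination of $\P^{N_{d}}$ forced by very generality, Lemma~\ref{lem:IncidenceFiberDim} for the exact dimension of the $\pi_{2}$-fibers, and the bound $4p_{g}(C)-4+(r+1)(n+1-r+h^{0}(f^{\ast}\O_{C}(1)))$ on the parameter space of curves. The only difference is cosmetic: you obtain that bound by a direct parameter count (moduli of the source curve, $\operatorname{Pic}^{\delta}$, and the $(n+2)$-tuple of sections modulo a scalar), whereas the paper bounds the tangent space to the stack of maps via the pulled-back Euler sequence and then adds $\dim \G(r,n+1)$ --- the two computations produce the identical number.
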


\begin{proof}
Let $g'$ be the geometric genus of $C$. Let $\mathcal{M}_{g'}(\mathbb{P}^{n+1},d)$ be the stack of degree $d$ maps from a (smooth) curve of genus $g'$ to $\mathbb{P}^n$. Fix a substack $\mathcal{M}\subseteq\mathcal{M}_{g'}(\mathbb{P}^{n+1},d)$ whose general member is a generically injective map $i:C'\rightarrow\mathbb{P}^{r}$ such that the image curve $C = i(C')$ has arithmetic genus $g$. We begin with the following:

\textbf{Claim.} The dimension of $\mathcal{M}$ is bounded from above by
\[ 4g'-4 + (r+1) \cdot h^{0}(i^{\ast}\O(1)). \]

To see this, note that the dimension of $\mathcal{M}$ is bounded from above by the dimension of the tangent space to $\mathcal{M}_{g'}(\mathbb{P}^{n+1},d)$ at $[i] \in \H$, which is bounded from above by $h^{0}(i^{\ast}T_{\P^{r}}) + 3g' - 3$. By pulling back the Euler sequence to get $0 \rightarrow \O_{C'} \rightarrow i^{\ast}\O(1)^{\oplus r+1} \rightarrow i^{\ast}T_{\P^{r}} \rightarrow 0$, we see that $h^{0}(i^{\ast}T_{\P^{r}}) \leq (r+1) \cdot h^{0}(f^{\ast}\O_{C}(1)) - h^{0}(\O_{C'}) + h^{1}(\O_{C'}) = (r+1) \cdot h^{0}(f^{\ast}\O_{C}(1)) + g' - 1$. Combining these two inequalities proves the claim.

Next, consider the incidence variety
\[ \Psi \coloneqq \left\{ (X, f \colon \P^{r} \rightarrow \P^{n+1}, i\colon C'\rightarrow\mathbb{P}^r) \in \P^{N_{d}} \times \Lin(\P^{r}, \P^{n+1}) \times \mathcal{M} \mid f(i(C')) \subset X \right\}. \]
There is a natural action of $\PGL_{r+1} = \Aut(\P^{r})$ on $\Psi$ via $\phi\cdot (X, f, i) \mapsto (X, f\circ \phi^{-1}, \phi\circ i)$, which gives $\Psi$ the structure of a $\PGL_{r+1}$-bundle. Since we are interested in the orbit space, let us pass to the quotient, which admits the following maps:
\begin{center}
\begin{tikzcd}[column sep=small]
    & {\Psi / \PGL_{r+1}} \arrow[ld, swap, "\pi_{1}"] \arrow[rd, "\pi_{2}"] & \\
\P^{N_{d}} & & {(\Lin(\P^{r},\P^{n+1}) \times \mathcal{M})/\PGL_{r+1}}
\end{tikzcd}
\end{center}
The fact that $X$ is general implies that $\Psi/\PGL_{r+1}$ must dominate $\P^{N_{d}}$ under the first projection. Note that the dimension of the fiber of $\pi_{2}$ is given by Lemma~\ref{lem:IncidenceFiberDim}. On the other hand, ${(\Lin(\P^{r},\P^{n+1}) \times \H)/\PGL_{r+1}}$ admits a map to the Grassmannian $\G(r, n+1)$, whose fibers have dimension bounded from above by the Claim. This implies that
\begin{align*}
N_{d} \leq \dim \Psi/\PGL_{r+1} &\leq N_{d} - (d\delta + 1 - g) + \dim \G(r,n+1) + 4g'-4 + (r+1) \cdot h^{0}(i^{\ast}\O(1)).
\end{align*}
Rearranging the terms and solving for $g$, we arrive at the desired inequality.
\end{proof}

\begin{remark}
In the proposition above, we did not assume that $C \subset \P^{r}$ is non-degenerate. In fact, choosing different values of $r$ may give better lower bounds. Setting $r = n+1$ gives
\begin{equation}\label{eq:r=n+1arithmeticgenusbound}
p_{a}(C) \geq d \delta - (n+2) h^{0}(f^{\ast}\O_{C}(1)) - 4 p_{g}(C) + 5
\end{equation}
amounts to studying a simpler incidence involving just pairs $(X, C)$.
\end{remark}

Finally, we conclude this section with some incidence arguments involving curves in general hypersurfaces which are also contained in low degree surfaces. We will need to fix some notation. Let $\H_{P}$ be a locally closed subscheme of the Hilbert scheme $\H$ such that each point of $\H_P$ corresponds to an integral surface $S \subset \P^{r}$ with Hilbert polynomial $P$, and let $\H_{\delta,g}$ be the locally closed subscheme of the Hilbert scheme consisting of points corresponding to integral curves $C \subset \P^{r}$ of degree $\delta$ and arithmetic genus $g$. We will prove:

\begin{proposition}\label{prop:arithmeticGenusIncidenceSurf}
    Let $X \subset \P^{n+1}$ be a (very) general hypersurface of degree $d$ and let $C \subset X$ be an integral curve of degree $\delta \leq d+2$. Suppose that $\Span(C) \cong \P^{r}$ for some $r \geq 3$, and furthermore assume that $C$ is contained in a surface $S \subset \P^{r}$ belonging to $\H_{P}$. Then
    \[ p_{a}(C) \geq d \delta - \dim \G(r,n+1) - \dim \H_{P} - \frac{\delta^{2}}{\deg S} \]
\end{proposition}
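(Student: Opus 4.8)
The plan is to run an incidence-correspondence argument in direct parallel with the proof of Proposition~\ref{prop:ArithmeticGenusIncidence}, the new feature being that we record the surface $S$ and use the constraint $C \subset S$ to bound the dimension of the family of curves. Concretely, I would form
\[ \Psi \coloneqq \left\{ (X, f\colon \P^{r} \hookrightarrow \P^{n+1}, S, C) \in \P^{N_{d}} \times \Lin(\P^{r},\P^{n+1}) \times \{(S,C)\} : S \in \H_{P},\ C \subset S \text{ integral of degree } \delta,\ f(C) \subset X \right\}, \]
equip it with the usual $\PGL_{r+1} = \Aut(\P^{r})$ action, and pass to the quotient, with its two projections $\pi_{1}$ to $\P^{N_{d}}$ and $\pi_{2}$ to the space of configurations (an $r$-plane, a surface $S$, and a curve $C \subset S$). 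Since $X$ is (very) general and contains $C$, the map $\pi_{1}$ must dominate $\P^{N_{d}}$, so $N_{d} \le \dim \Psi/\PGL_{r+1}$.

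Next I would estimate the two sides of $\pi_{2}$. The fiber over a fixed configuration is the projectivization of $H^{0}(\P^{n+1}, \I_{C}(d))$, whose dimension is exactly $N_{d} - (d\delta + 1 - p_{a}(C))$ by Lemma~\ref{lem:IncidenceFiberDim} (this is where $r \ge 3$ and $\delta \le d+2$, hence the Gruson--Lazarsfeld--Peskine regularity, enter). For the base of $\pi_{2}$, the choice of $r$-plane contributes $\dim \G(r,n+1)$ after quotienting by $\PGL_{r+1}$, the surface contributes $\dim \H_{P}$, and the remaining task is to bound the dimension of the family of integral degree-$\delta$ curves on a \emph{fixed} surface $S$.

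The crux is therefore the claim that the family of integral curves of degree $\delta$ on a fixed integral surface $S \subset \P^{r}$ of degree $e = \deg S$ has dimension at most $\tfrac{\delta^{2}}{e} + 1$. To prove it I would fix a resolution $\nu\colon \widetilde{S} \to S$ and replace each such $C$ (excluding the finitely many curves contained in $\mathrm{Sing}(S)$, which form a bounded subfamily) by its strict transform $\widetilde{C}$, an integral Cartier divisor on the smooth surface $\widetilde{S}$; this assignment is injective, so it suffices to bound the Hilbert scheme of $\widetilde{S}$ at $[\widetilde{C}]$. Writing $\widetilde{H} = \nu^{*}\O_{S}(1)$, the projection formula gives $\widetilde{C}\cdot\widetilde{H} = \delta$ and $\widetilde{H}^{2} = e$, and since $\widetilde{H}$ is nef and big (so $\widetilde{H}^{2} > 0$) the Hodge index theorem yields $e\cdot\widetilde{C}^{2} \le (\widetilde{C}\cdot\widetilde{H})^{2} = \delta^{2}$, i.e. $\widetilde{C}^{2} \le \delta^{2}/e$. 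The tangent space to the Hilbert scheme at $[\widetilde{C}]$ is $H^{0}(\widetilde{C}, N_{\widetilde{C}/\widetilde{S}})$, and because $\widetilde{C}$ is Cartier this normal sheaf is the line bundle $\O_{\widetilde{S}}(\widetilde{C})|_{\widetilde{C}}$ of degree $\widetilde{C}^{2}$; the standard bound $h^{0}(L) \le \deg L + 1$ for a line bundle of nonnegative degree on an integral curve then gives dimension $\le \widetilde{C}^{2} + 1 \le \delta^{2}/e + 1$ (if $\widetilde{C}^{2} < 0$ the family is rigid and the bound is automatic).

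Assembling the fiber and base estimates gives
\[ N_{d} \le \left[ N_{d} - (d\delta + 1 - p_{a}(C)) \right] + \dim \G(r,n+1) + \dim \H_{P} + \frac{\delta^{2}}{\deg S} + 1, \]
and rearranging produces the asserted inequality, with the two $+1$'s cancelling. I expect the main obstacle to be the surface claim rather than the incidence machinery: one must check carefully that passing to strict transforms on a resolution neither loses curves nor drops the dimension of the family, that $\widetilde{C}$ stays integral so the degree--$h^{0}$ bound applies, and that the Hodge index input is valid for $\widetilde{H}$ merely nef and big. The dominance and fiber-dimension steps are routine adaptations of Proposition~\ref{prop:ArithmeticGenusIncidence}.
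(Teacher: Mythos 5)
Your proposal is correct and follows essentially the same route as the paper: the same incidence correspondence quotiented by $\PGL_{r+1}$, the fiber dimension of $\pi_{2}$ from Lemma~\ref{lem:IncidenceFiberDim}, and the same bound on the family of curves in a fixed surface via strict transforms on a resolution, the Hodge index theorem, and the estimate $h^{0} \leq \deg + 1$ on the normal bundle. No substantive differences to report.
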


\begin{proof}
    Consider the incidence variety
    \[ \Phi_{P,\delta,g,r}^{d} \coloneqq \left\{ (X, f\colon \P^{r} \rightarrow \P^{n+1}, S, C) \in \P^{N_{d}} \times \Lin(\P^{r}, \P^{n+1}) \times \H_{P} \times \H_{\delta,g} \middle| f(C) \subset f(S) \cap X \right\}.
    \]
Note that $\PGL_{r+1}$ acts on the last three components in a natural way. Passing to the quotient, we arrive at the following maps:
\begin{center}
\begin{tikzcd}
    & {\Phi_{P,\delta,g,r}^{d}/\PGL_{r+1}} \arrow[ld, swap, "\pi_{1}"] \arrow[rd, "\pi_{2}"] & \\
    \P^{N_{d}} & & {(\Lin(\P^{r}, \P^{n+1}) \times \H_{P} \times \H_{\delta,g})/\PGL_{r+1}}
\end{tikzcd}
\end{center}
We know that $\Phi_{P,\delta,g,r}^{d}/\PGL_{r+1}$ must dominate $\P^{N_{d}}$ under the first projection. The dimension of any fiber of $\pi_{2}$ is given by Lemma~\ref{lem:IncidenceFiberDim}. On the other hand, the image of $\pi_{2}$ lies in the locus $Z$ of triples $(f,S,C)$ with $C\subseteq S$. Let us bound the dimension of $Z$.

The variety $(\Lin(\P^{r}, \P^{n+1}) \times \H_{P})/\PGL_{r+1}$ has dimension equal to $\dim \G(r,n+1) + \dim \H_{P}$, so it suffices to bound the dimension of the fiber of $Z$ above any fixed pair $(f,S).$ This fiber can be identified with the closed subscheme $\mathcal{H}_{\delta,g}^S\subseteq\mathcal{H}_{\delta,g}$ parametrizing those $C$ that lie inside $S$. If we take a minimal resolution $\nu \colon S' \rightarrow S$, there will be a dense open subscheme $U\subseteq\mathcal{H}_{\delta,g}^S$ such that the universal family of curves $C_U\subseteq S\times U$ lifts to a universal family of curves $C'_U\subseteq S'\times U$ in a way so that for any point $p\in U$, the fiber $C'_p$ of $C'_U$ over $p$ is the strict transform of the fiber $C_p$ of $C_U$ over $p$. The dimension of $U$ (equivalently, of $\mathcal{H}_{\delta,g}^S$) at $p$ is bounded from above by the dimension of the tangent space to the corresponding Hilbert scheme of $S'$, which is given by $h^{0}(N_{C'_p/S'}) = h^{0}(\O_{C'_p}(C'_p))$. Let us abuse notation and suppress the dependence on $p$, i.e., we will write $C'$ for $C'_p.$

To bound $h^{0}(\O_{C'}(C'))$, we note the following Lemma: If $C'$ is an integral Gorenstein curve and $D$ is an effective divisor on $C'$, then $h^{0}(C', D) \leq \deg D+1$. The line bundle $\O_{C'}(C')$ has degree equal to $(C')^{2}$ which, by the Hodge Index theorem on $S'$, satisfies $(C')^{2} \leq \frac{(C' \cdot \nu^{\ast}\O_{S}(1))^{2}}{\O_{S}(1)^{2}} = \frac{\delta^{2}}{\deg S}$. Plugging this into the lemma, we find that
\[ h^{0}(N_{C'/S'}) \leq \max \left\{ 0, \frac{\delta^{2}}{\deg S} + 1 \right\} \leq 1 + \frac{\delta^{2}}{\deg S}. \]
Combining all of the above ingredients, we see that
\begin{align*}
N_{d} &\leq \dim \Phi_{P,\delta,g,r}^{d}/\PGL_{r+1} \\
&\leq N_{d} - (d\delta + 1 - p_{a}(C)) + \dim \G(r,n+1) + \dim \H_{P} + 1 + \frac{\delta^{2}}{\deg S}
\end{align*}
Rearranging the terms and solving for $p_{a}(C)$ gives the desired bound.
\end{proof}

\section{Proof of Theorem~\ref{thm:planeCurvesMinimal}}

For the convenience of the reader, let us recall the statement of the theorem:

Let $X \subset \P^{n+1}_{\C}$ be a general hypersurface of degree $d \geq 2n$ and let $C \subset X$ be a curve of degree $\delta$. If $\delta \leq d+2$, then $\delta = d$ and $C = X \cap \Lambda$ for some 2-plane $\Lambda$.

The case where $n = 1$ is trivial, the case where $n = 2$ follows from the Noether--Lefschetz theorem, and the case where $n = 3$ was proved by Wu \cite{Wu90}. Thus, we may assume that $n \geq 4$. The overarching goal will be to use the ingredients from \S2 to derive a contradiction with the Castelnuovo-type bounds in \S1. A fact that we will use at some point is the following: a non-degenerate curve $C \subset \P^{r}$ always satisfies $\deg C \geq r$, and equality holds iff $C$ is a rational normal curve. The next case where $\deg C = r+1$ can hold only if $p_{g}(C) \leq 1$. Since $X$ does not contain any rational curves \cite{Voisin96}, from the condition $r \geq 3$ we know that $\delta \geq 4$. Let $f \colon C' \rightarrow C$ denote the normalization. Assume for contradiction that $\dim \Span(C) \geq 3$ and $C$ is not contained in such a surface $S$.

\noindent \underline{Step 1}: Let us show that $p_{g}(C) \geq 4$.

First assume that $(n, d) \neq (4, 8)$. Since $\delta \geq 4$, from \cite[Theorem 6.1]{CR22} and the fact that $X$ does not contain any lines it follows that $p_{g}(C) \geq 3$. (It is not strictly necessary to use \cite{CR22}, but doing so simplifies the argument.) We can also rule out $\delta = 4, 5$ because otherwise $r \geq 3$ implies (by the Castelnuovo theorem for $r = 3$) that $p_{g}(C) \leq 2$, which contradicts the previous sentence. Thus, we may assume that $\delta \geq 6$, and the same result \cite[Theorem 6.1]{CR22} shows that $p_{g}(C) \geq 4$.

Now let us analyze the case $(n,d) = (4,8)$ separately. We claim that a very general such $X$ does not contain any curve $C$ of degree $\delta \leq d + 2 \leq 10$ and geometric genus $p_{g}(C) \leq 3$. Suppose for contradiction that $X$ contained such a curve. Applying Proposition~\ref{prop:ArithmeticGenusIncidence} with $(n,d) = (4,8)$ and using the fact that $h^{0}(f^{\ast}\O_{C}(1)) \leq \delta$ (because $C$ is not rational), we get
\begin{align*}
p_{a}(C) &\geq d\delta - (r+1)(n+1-r+h^{0}(f^{\ast}\O_{C}(1))) - 4 p_{g}(C) + 5 \\
&\geq (7-r)\delta - (r+1)(5-r) - 4 p_{g}(C) + 5.
\end{align*}
For $p_{g}(C) = 1, 2, 3$ and $3 \leq \dim \Span(C) \leq 5$, one can check that this contradicts the Castelnuovo bounds \eqref{eq:anyr>=3Castelnuovo}, \eqref{eq:r=4Castelnuovo}, \eqref{eq:r=3Castelnuovo} from \S1 when $\delta \leq 10$.

Thus, from now on we may assume that $p_{g}(C) \geq 4$.

\noindent \underline{Step 2}: We claim that $\dim \Span(C) \leq 4$. The cases where $\dim \Span(C) > 5$ can all be treated in the same way as the case $\dim \Span(C) = 5$, so without loss of generality let us assume for contradiction that $\dim \Span(C) = 5$. By the Castelnuovo inequality \eqref{eq:anyr>=3Castelnuovo} with $r = 5$, we have
\begin{equation}\label{eq:Castelnuovor=5}
p_{a}(C) \leq \left\lfloor \frac{1}{8} (\delta^{2} - 6 \delta + 9 ) \right\rfloor.
\end{equation}
Since $r = 5$ and $p_{a}(C) \geq p_{g}(C) \geq 4$, we may assume that $\delta \geq 8$.

In this step as well as the next ones, we will need to bound $h^{0}(f^{\ast}\O_{C}(1))$. Riemann--Roch implies that $h^{0}(f^{\ast}\O_{C}(1)) = \delta + 1 - p_{g}(C) + h^{1}(f^{\ast}\O_{C}(1))$, and if $f^{\ast}\O_{C}(1)$ is special then Clifford's theorem says $h^{0}(f^{\ast}\O_{C}(1)) \leq \lfloor \frac{\delta}{2} \rfloor + 1$.

\underline{Case 2(a)}: Suppose $f^{\ast}\O_{C}(1))$ is nonspecial, i.e. $h^{1}(f^{\ast}\O_{C}(1)) = 0$. Then \eqref{eq:r=n+1arithmeticgenusbound} together with Riemann--Roch gives
\begin{align*}
p_{a}(C) &\geq d \delta - (n+2)(\delta + 1 - p_{g}(C)) - 4 p_{g}(C) + 5 \\
&= (d-n-2) \delta + (n-2)(p_{g}(C)-1) + 1 \\
&\geq (d-n-2) \delta + 7 \\
&\geq (\frac{1}{2}(\delta-2) - 2) \delta + 7.
\end{align*}
The second to last inequality uses the fact that $n \geq 4$ and $p_{g}(C) \geq 4$, while the last line uses the fact that $d \geq 2n$ and $\delta \leq d+2$. This contradicts \eqref{eq:Castelnuovor=5}.

\underline{Case 2(b)}: Suppose $f^{\ast}\O_{C}(1)$ is special. Then \eqref{eq:r=n+1arithmeticgenusbound} together with Clifford's theorem gives
\begin{align*}
p_{a}(C) \geq d \delta - (n+2)(\frac{\delta}{2} + 1) - 4 p_{g}(C) + 5 &\implies 5 p_{a}(C) \geq (d- \frac{n+2}{2}) \delta - n + 3.
\end{align*}
Solving for $p_{a}(C)$, we have
\begin{align*}
    p_{a}(C) & \geq \frac{1}{5} (d-\frac{n}{2} - 1) \delta - \frac{n}{5} + \frac{3}{5} \\
    & \geq \frac{1}{5} (\frac{3d}{4} - 1) \delta - \frac{d}{10} + \frac{3}{5} \\
    &= \frac{3}{20} d (\delta - \frac{2}{3}) - \frac{1}{5}\delta + \frac{3}{5}\\
    &\geq \frac{3}{20} (\delta-2) (\delta - \frac{2}{3}) - \frac{1}{5}\delta + \frac{3}{5}\\
    &=\frac{3}{20}\delta^2-\frac{3}{5}\delta+\frac{4}{5},
\end{align*}  
which contradicts \eqref{eq:Castelnuovor=5}. This completes the proof of Step 2.

\noindent \underline{Step 3}: We claim that $\dim \Span(C) \not= 4$.

Suppose for contradiction that $\dim \Span(C) = 4$. Since $p_{g}(C) \geq 4$, from the Castelnuovo inequality \eqref{eq:r=4Castelnuovo} we know that $\delta \geq 8$.

\underline{Case 3(a)}: Suppose $\delta \geq 8$, $p_{g}(C) \geq 4$, and $h^{1}(\O_{C}(1)) = 0$. Then applying Proposition~\ref{prop:ArithmeticGenusIncidence} with $r = 4$ and using Riemann--Roch along the lines of Case 2(a) gives
\begin{align*}
p_{a}(C) &\geq d\delta - 5 (n-3 + \delta + 1 - p_{g}(C) + h^{1}(f^*\O_{C}(1))) - 4 p_{g}(C) + 5 \\
&= (d-5)\delta - 5n + p_{g}(C) - 5 h^{1}(f^*\O_{C}(1)) + 15 \\
&\geq (d-5)\delta - 5n + 19 \\
&\geq (d-5)(\delta-5/2) - 25/2 + 19\\
&\geq (\delta-7)(\delta-5/2) - 25/2 + 19\\
&=\delta^2-\frac{19}{2}\delta+24,
\end{align*}
which is a contradiction of \eqref{eq:r=4Castelnuovo}.

\underline{Case 3(b)}: Suppose $\delta \geq 8$, $p_{g}(C) \geq 4$, and $h^{1}(\O_{C}(1)) \not= 0$. Then applying Proposition~\ref{prop:ArithmeticGenusIncidence} with $r = 4$ and using Clifford's theorem in a similar way to Case 2(b) gives
\begin{align*}
p_{a}(C) &\geq \frac{1}{5} (d - \frac{r+1}{2}) \delta - \frac{1}{5} (r+1)(n+2-r) + 1 \\
&\geq \frac{1}{5} (d - \frac{5}{2}) \delta - (n-2)+1 \\
&\geq (\frac{1}{5}d - \frac{1}{2}) (\delta - \frac{5}{2}) + \frac{7}{4}\\
&\geq (\frac{1}{5}(\delta-2) - \frac{1}{2}) (\delta - \frac{5}{2}) + \frac{7}{4}\\
&=\frac{1}{5}\delta^2-\frac{7}{5}\delta+4,
\end{align*}
which contradicts \eqref{eq:r=4Castelnuovo}.

\noindent \underline{Step 4:} Suppose $\dim \Span(C) = 3$. Then we claim that $C$ is not contained in a surface $S \subset \P^{3}$ of degree $\leq 3$. Moreover, if $\delta \geq 10$ then $C$ is not contained in a surface of degree $4$.

Proposition~\ref{prop:incidenceQuadricSurf} shows that $C$ is not contained in a quadric surface. Assume for contradiction that $C$ is contained in a cubic surface. The dimension of the space of cubic surfaces in $\P^{3}$ is $19$. Following the set-up in Proposition~\ref{prop:arithmeticGenusIncidenceSurf}, we see that
\[ p_{a}(C) \geq d \delta - 4(n-2) - 19 - \frac{\delta^{2}}{3} \geq \frac{2}{3}\delta^{2} - 4\delta - 7, \]
where the last inequality follows from the fact that $\delta \leq d+2$ and $d \geq 2n$. This contradicts Halphen's bound \eqref{eq:HalphenCubicSurf} when $\delta \geq 9$. Since $d \geq 2n \geq 8$, $\delta \leq 8$ implies that $\delta \leq d$. This leads to an improved bound of $p_{a}(C) \geq \frac{2}{3}\delta^{2} - 2\delta - 11$, which allows us to reach a contradiction when $\delta \geq 7$. Thus, the remaining cases to work out are when $\delta = 5, 6$ (note that if $\delta = 4$, then $C$ is contained in a quadric). Both of these remaining two cases can be handled by noting that $p_{g}(C) \geq 5$ -- indeed, if $p_{g}(C) = 4$, then applying Proposition~\ref{prop:ArithmeticGenusIncidence} with $r = 3$ and using either Riemann--Roch or Clifford's theorem to bound $h^{0}(f^{\ast}\O_{C}(1))$ as before leads to an impossible lower bound on $p_{a}(C)$. But $p_{g}(C) \geq 5$ contradicts \eqref{eq:HalphenCubicSurf}.

Next, let us suppose for contradiction that $\delta \geq 10$ and $C$ is contained in a quartic surface. The dimension of the space of quartic surfaces in $\P^{3}$ is $34$. Again following the set-up in Proposition~\ref{prop:arithmeticGenusIncidenceSurf}, we see that
\[ p_{a}(C) \geq d \delta - 4(n-2) - 34 - \frac{\delta^{2}}{4} \geq \frac{3}{4} \delta^{2} - 4 \delta - 22, \]
where the last inequality follows from the fact that $\delta \leq d+2$ and $d \geq 2n$. This contradicts the bound in \eqref{eq:r=3Harris}, which can be simplified to
\[ p_{a}(C) \leq \begin{cases} \frac{\delta^{2}}{8} + 1 & \text{if } \delta > 12 \\ \frac{\delta^{2}}{6} - \frac{\delta}{2} + 1 & \text{if } 10 \leq \delta \leq 12. \end{cases} \]

\noindent \underline{Step 5}: Still assuming $\dim \Span(C) = 3$, we will derive a contradiction with Step 4.

We will first show that $\delta \geq 8$. Note that $\delta \geq 7$, because otherwise $H^{0}(\O_{\P^{3}}(3)) = 20$ so we would be able to find a cubic surface $S$ meeting $C$ in at least 19 points. By B\'{e}zout's theorem, $C$ is contained in $S$, which is a contradiction. Next, we will need a result of Gruson-Peskine \cite[Th\'{e}or\`{e}me de Sp\'{e}cialit\'{e}]{GP78}, which says that if $C \subset \P^{3}$ is an integral curve of degree $\delta$ which is not contained in a surface of degree $< s$, then for any integer $n \geq s + \frac{\delta}{s} - 4$, the line bundle $\O_{C}(n)$ is special if and only if $n = s + \frac{\delta}{s} - 4$ and $C$ is a complete intersection curve of type $(s, \frac{\delta}{s})$. Consider the two situations below:
\begin{enumerate}[(i)]
    \item Suppose $\delta \leq 12$. Apply the theorem above with $n = 3$, $s = 3$. Since $C$ is not contained in surface of degree $\leq 3$, in particular it cannot be a complete intersection curve of type $(s, \frac{\delta}{s})$ and so $\O_{C}(3)$ is nonspecial.
    \item Suppose $\delta \leq 15$. Apply the theorem above with $n = 4$, $s = 3$. Since $C$ is not a complete intersection curve of type $(s, \frac{\delta}{s})$, as before we see that $\O_{C}(4)$ is nonspecial (note that for $\delta = 11, 12$ this follows from the previous bullet point).
\end{enumerate}
Both of these observations will be used later. For now, we note that Riemann--Roch and (i) imply that if $\delta = 7$, then
\[ h^{0}(\O_{C}(3)) = 3 \delta + 1 - p_{a}(C) = 22 - p_{a}(C) < 20 = h^{0}(\O_{\P^{3}}(3)). \]
The middle inequality follows from Step 1, which showed that $p_{a}(C) \geq p_{g}(C) \geq 4$. But this contradicts the fact that $C$ is not contained in a cubic surface. So $\delta \geq 8$ as claimed.

\underline{Case 5(a)}: Suppose $\delta \geq 8$, $p_{g}(C) \geq 4$, and $h^{1}(f^{\ast}\O_{C}(1)) = 0$. Applying Proposition~\ref{prop:ArithmeticGenusIncidence} with $r = 3$ and using Riemann--Roch (along the lines of Case 2a) gives
\begin{align*}
p_{a}(C) &\geq d\delta - 4 (n-2 + \delta + 1 - p_{g}(C) ) - 4 p_{g}(C) + 5 \\
&= (d-4)\delta - 4n  + 9 \\
&\geq (\delta-6)(\delta-2) + 1
\end{align*}
where the last line uses the fact that $d \geq 2n$. This contradicts \eqref{eq:GrusonPeskineNotInQuadricCubicSurf} (assuming $C$ is not contained in a surface $S \subset \P^{3}$ of degree $\leq 3$) when $\delta \geq 8$.

\underline{Case 5(b)}: Suppose $\delta \geq 8$, $p_{g}(C) \geq 4$, and $h^{1}(f^{\ast}\O_{C}(1)) \not= 0$. Then applying Proposition~\ref{prop:ArithmeticGenusIncidence} with $r = 3$ and using Clifford's theorem to bound $h^{0}(f^{\ast}\O_{C}(1)) \leq \lfloor \frac{\delta}{2} \rfloor + 1$ (similar to Case 2b) gives
\begin{align*}
p_{a}(C) &\geq \frac{1}{5} \left[ d \delta - 4 (n-2 + \lfloor \frac{\delta}{2} \rfloor + 1) + 5 \right] \\
&\geq \frac{1}{5} \left[ d (\delta-2) - 4 \lfloor \frac{\delta}{2} \rfloor + 9 \right].
\end{align*}
Plugging in $d\geq\delta-2$ and comparing this with \eqref{eq:GrusonPeskineNotInQuadricCubicSurf}, we arrive at a contradiction when $\delta \geq 15$. We will finish the cases where $\dim \Span(C) = 3$ and $8 \leq \delta \leq 14$ by hand.

When $8 \leq \delta \leq 9$ and $\delta \leq d+2$, one can apply (i) plus the arithmetic genus bound above to Riemann--Roch, which gives $h^{0}(\O_{C}(3))  = 3\delta + 1 - p_{a}(C) < 20 = h^{0}(\O_{\P^{3}}(3))$. This contradicts the fact that $C$ is not contained in a cubic surface. When $10 \leq \delta \leq 14$, the same strategy using (ii) implies that $h^{0}(\O_{C}(4)) < 35 = h^{0}(\O_{\P^{3}}(4))$. Therefore, $C$ is contained in a quartic surface, which contradicts Step 4. This completes the proof. \qed

\section{Subvarieties of higher degree}

In this section, we would like to prove a number of results which extend beyond curves of degree $\leq d+2$. We begin with the following:

\begin{theorem}\label{thm:curvesHigherDegree}
    Fix any $s \in \Z_{\geq 1}$. Then there exists a positive integer $d_{0} = d_{0}(s, n)$ such that if $d \geq d_{0}$ and $X \subset \P^{n+1}$ is a general hypersurface of degree $d$ and dimension $n \geq 3$, then any integral curve $C \subset X$ of degree $\delta \leq sd$ is equal to a generically transverse intersection $X \cap S$ for a unique surface $S$ of degree $\leq s$. Furthermore, one can choose $d_{0}(1, n) = 2n$ and
    \[ d_{0}(s,n) \coloneqq \max \left\{(s+1)(s + 3n -1), \ \frac{2(s+1)}{(n-1)(s-1)} \prod_{i=1}^{n-1} \sqrt[n-i]{n! (s+1)} \right\} \quad \text{ for } s \geq 2. \]
\end{theorem}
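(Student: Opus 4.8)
The plan is to induct on $s$, with Theorem~\ref{thm:planeCurvesMinimal} (the case $s=1$, which gives $d_0(1,n)=2n$) as the base case; note that $d_0(s,n)$ is nondecreasing in $s$, so whenever $d\ge d_0(s,n)$ the inductive hypothesis is available for every smaller value of $s$. Let $C\subset X$ be an integral curve of degree $\delta\le sd$. If $\delta \le (s-1)d$, the inductive hypothesis already writes $C = X\cap S$ with $\deg S \le s-1 \le s$, so there is nothing to do; hence I may assume from now on that $(s-1)d < \delta \le sd$. In this top band the entire goal is to produce a surface $S\subset\P^{n+1}$ of degree $\le s$ containing $C$, after which the identification $C = X\cap S$ and the uniqueness of $S$ will come from a degree count.

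First I would feed the incidence bounds of \S2 into the Castelnuovo-type bounds of \S1 to control the span $r\coloneqq\dim\Span(C)$. Since $\delta > (s-1)d$, Proposition~\ref{prop:ArithmeticGenusIncidence} --- with the optimal choice of target projective space, and with $h^{0}(f^{\ast}\O_{C}(1))$ estimated by Riemann--Roch when $\O_{C}(1)$ is nonspecial and by Clifford's theorem when it is special, exactly as in the case analysis of \S3 --- produces a lower bound for $p_{a}(C)$ of order $d^{2}$. Setting this against Castelnuovo's inequality \eqref{eq:anyr>=3Castelnuovo}, whose right-hand side is at most $\tfrac{\delta^{2}}{2(r-1)}\le \tfrac{s^{2}d^{2}}{2(r-1)}$, is only possible when $r-1 = O(s)$. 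Thus $\dim\Span(C)$ is bounded purely in terms of $s$, which is what allows the Grassmannian factor $\dim\G(r,n+1)=(r+1)(n+1-r)$ in the later incidence estimates to be kept under control. This is the analogue of Steps 2--3 of \S3.

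The heart of the matter is to promote this near-extremal genus into a surface of degree $\le s$ through $C$. Concretely, the lower bound $p_{a}(C) \gtrsim (s-1)d^{2}$ says that, for its degree and span, $C$ has nearly the largest genus that Castelnuovo allows; by the theory of near-extremal curves --- equivalently, by a direct analysis of $h^{0}(\P^{r},\I_{C}(e))$, which is forced to be large because the high genus makes the Hilbert function of $C$ grow slowly --- such a curve must lie on a surface of small degree. The plan is therefore to show that $C$ lies on a surface $S$ with $\deg S \le s$, and this is precisely the step that requires $d \ge d_{0}(s,n)$. I expect the iterated-root shape of the second bound defining $d_{0}(s,n)$ to reflect an induction on the ambient dimension: peeling off one hyperplane section at a time, the degree of the hypersurface needed to contain $C$ at each codimension is governed by a root of $n!(s+1)$, and these degrees multiply into the displayed product, while the first bound $(s+1)(s+3n-1)$ should be what makes the span/genus comparison of the previous paragraph go through. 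Proposition~\ref{prop:arithmeticGenusIncidenceSurf} enters here in contrapositive form, to exclude the possibility that $C$ lies only on surfaces of degree $>s$. This existence statement, together with the verification that the resulting surface has degree genuinely $\le s$ rather than merely $O(s)$, is the main obstacle.

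Finally, given a minimal-degree surface $S\supseteq C$ with $e\coloneqq \deg S \le s$, I would conclude by a degree comparison. A general $X$ contains no surface of degree $\le s$ (a dimension count in the spirit of Propositions~\ref{prop:incidence2plane} and \ref{prop:incidenceQuadricSurf}), so $S\not\subseteq X$ and $X\cap S$ is a genuine curve of degree $de$ containing $C$. Generalizing those same propositions --- that for general $X$ the intersection with any surface of degree $\le s$ has no proper low-degree component splitting off --- forces $C=(X\cap S)^{\mathrm{red}}$ with the intersection generically transverse, so that $\delta = de$ is a multiple of $d$ and $e = \delta/d \le s$. Uniqueness of $S$ follows because such a surface is recovered from $C$ itself as the common zero locus of the hypersurfaces of degree $\le s$ containing $C$. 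Combined with the base case and the band reduction, this completes the induction on $s$.
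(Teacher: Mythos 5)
Your skeleton (induction on $s$, reduction to the band $(s-1)d < \delta \le sd$, production of a surface of degree $\le s$ through $C$, conclusion by a degree count) matches the paper's, but both load-bearing steps have genuine gaps. First, your genus lower bound of order $d^2$ is supposed to come from Proposition~\ref{prop:ArithmeticGenusIncidence}, but that proposition (and Lemma~\ref{lem:IncidenceFiberDim} behind it) is only proved for $\delta \le d+2$: the Gruson--Lazarsfeld--Peskine regularity input needs $d \ge \delta + 2 - r$ for degree-$d$ hypersurfaces to cut out a complete linear series on $C$, and this fails badly when $\delta \approx sd$ with $s \ge 2$, so the fiber dimension of $\pi_2$ in that incidence correspondence is no longer computable this way. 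The paper replaces this engine by Voisin's theorem, valid for curves of arbitrary degree on a very general hypersurface of degree $d \ge 2n$: $p_g(C) \ge 1 + \tfrac{1}{2}(d-2n-1)\delta$. (Your preliminary step of bounding $\dim\Span(C)$ is correspondingly absent from the paper; degeneracy is handled simply by observing that the relevant genus upper bound only improves as the span drops.) Second, the step you yourself flag as ``the main obstacle'' --- converting near-extremal genus into a surface of degree $\le s$ containing $C$ --- is exactly where the paper invokes a specific external result: the higher-dimensional Castelnuovo--Gruson--Peskine bound of \cite{CCDG93}, an explicit upper bound on $p_a(C)$ for curves \emph{not} lying on a surface of degree $\le s$, applicable once $\delta$ exceeds the threshold that produces the iterated-root term in $d_0(s,n)$. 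Claim~1 of the paper is precisely the comparison of that upper bound with Voisin's lower bound; Proposition~\ref{prop:arithmeticGenusIncidenceSurf} plays no role in this section, contrary to your suggestion.

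For the final step you propose to redo the incidence dimension counts of Propositions~\ref{prop:incidence2plane} and \ref{prop:incidenceQuadricSurf} for arbitrary surfaces of degree $\le s$; you do not carry this out, and it would be substantially harder than the cases of planes and quadrics. The paper sidesteps it using the induction hypothesis itself: if $X \cap S$ were not integral, each component $C_i$ would have degree in $[d,(s-1)d]$, hence by induction $C_i = X \cap T_i$ for a surface $T_i$ of degree $a_i$ with $T_i \ne S$, and then $C_i \subset S \cap T_i$ forces $d\,a_i \le \deg S \cdot a_i \le s\,a_i < d\,a_i$, a contradiction. Uniqueness is likewise a one-line degree count rather than a recovery of $S$ from the ideal of $C$.
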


\begin{proof}
    We will prove this by induction on $s$. The base case $s = 1$ follows from Theorem~\ref{thm:planeCurvesMinimal}. Let us assume that the theorem holds for all $s \leq k-1$ (for some positive integer $k$). We would like to prove the theorem for $s = k$. Let $C \subset X$ be a curve of degree $\delta \leq kd$ contained in a very general hypersurface of degree $d$. By the inductive hypothesis, we may assume that $(k-1)d + 1 \leq \delta \leq kd$.

    \textbf{Claim 1.} If $d \geq d_{0}(k,n)$, then $C$ is contained in a surface $S \subset \P^{n+1}$ of degree $\leq k$.

    To see this, suppose for contradiction that $C$ is not contained in such a surface. The assumption on $d$ and $\delta \geq (k-1)d + 1$ implies that
    \[ \delta > \frac{2(k+1)}{n-1} \prod_{i=1}^{n-1} \sqrt[n-i]{n! (k+1)}. \]
    Given this lower bound on $\delta$, the main result of \cite{CCDG93} is the upper bound
    \begin{equation}\label{eq:genusboundprelim}
    p_a(C)\leq 1+\frac{\delta}{2}(w+m-2)-\frac{m+1}{2}(w-\epsilon)+(w+1)\frac{vm}{2}+\rho,
    \end{equation}
    where $m,\epsilon, w, v,$ and $\rho$ are defined in loc.cit. (Note that our conventions differ from that of \cite{CCDG93}; since our assumption involves $\leq k$, in their definitions of $m,\epsilon,w$, and $v$ we must replace $s$ with $k+1$ everywhere.)

    We will weaken (\ref{eq:genusboundprelim}) to a more convenient bound which does not require the definition of any new variables. Using the naive inequalities
    \[ m \leq \frac{\delta-1}{k+1}, \quad w \leq \frac{k}{n-1}\leq k, \quad v \leq n-2, \quad \epsilon \leq k, \quad \rho \leq \frac{\epsilon}{2}\leq\frac{k}{2}, \quad \text{and} \ w-\epsilon\geq-\epsilon\geq-k, \]
    \eqref{eq:genusboundprelim} implies that
    \begin{align}\label{eq:genusboundinduction}
    p_{a}(C) &\leq 1+\frac{\delta}{2}(w+m-2)+\frac{m+1}{2}\cdot k+(w+1)\frac{vm}{2}+\rho \nonumber \\
    &\leq1+\frac{\delta}{2}(k+\frac{\delta-1}{k+1}-2)+\frac{k(\delta+k)}{2(k+1)}+\frac{(n-2)(\delta-1)}{2}+\frac{k}{2}\nonumber\\
    &=\frac{\delta^2}{2(k+1)}+(\frac{k}{2}+\frac{k-1}{2(k+1)}-1+\frac{n-2}{2})\delta+\frac{k^2}{2(k+1)}-\frac{(n-2)}{2}+\frac{k}{2}+1\nonumber\\
    &\leq\frac{\delta^2}{2(k+1)}+\frac{k+n-3}{2}\cdot\delta+\frac{2k-n+4}{2}.
    \end{align}
    
    Strictly speaking, the main result of \cite{CCDG93} assumes that $C$ is a non-degenerate curve in $\P^{n+1}$. However, if $C$ spans a lower-dimensional linear subspace $\Lambda$, we claim that the bound still applies. The reader can verify that the assumption on $\delta$ is even easier to satisfy (the lower bound on $\delta$ decreases as $n$ decreases), $C \subset \Lambda$ is still not contained in a surface of degree $\leq k$, and the bound on the arithmetic genus in \eqref{eq:genusboundinduction} becomes stronger as $n$ decreases. For this last part, observe that the right hand side is a linear function in $n$ with slope $\frac{1}{2} (\delta - 1)\geq 0$.
    
    On the other hand, Voisin \cite{Voisin96} (cf. \cite[Proposition 3.1]{Chen24}) has shown that on a very general hypersurface $X \subset \P^{n+1}$ of degree $d \geq 2n$, the geometric genus of any integral curve $C \subset X$ is bounded from below by
    \[ p_{g}(C) \geq 1 + \frac{1}{2} (d-2n-1) \delta. \]
    We can now use the inequality $p_{g}(C) \leq p_{a}(C)$ to see that:
    \begin{align*}
    1 + \frac{1}{2} (d-2n-1)\delta&\leq \frac{\delta^2}{2(k+1)}+\frac{k+n-3}{2}\cdot\delta+\frac{2k-n+4}{2}\\
    0&\leq\frac{\delta^2}{2(k+1)}+\frac{k+3n-2-d}{2}\cdot\delta+\frac{2k-n+2}{2}\\
    0&\leq\frac{\delta}{k+1}+k+3n-2-d+\frac{2k-n+2}{\delta}.
    \end{align*}
    We have the simple bound
    \[
    \frac{2k-n+2}{\delta}<\frac{2k}{\delta}<\frac{2k}{(k-1)d}\leq\frac{4}{d}<1,
    \]
    so we get
    \[
    d<\frac{\delta}{k+1}+k+3n-1,
    \]
    or
    \[
    (k+1)d-\delta<(k+1)(k+3n-1).
    \]
    Because $\delta \leq kd$, we have $(k+1)d-\delta\geq d$ so $d < (k+1)(k + 3n-1)$, which contradicts our assumption $d \geq d_{0}(k, n)$.

    From Theorem~\ref{thm:main}, we see that $X \subset \P^{n+1}$ does not contain any surfaces of degree $\leq k$ since $d \geq 2n$ and $d \geq k+1$. Since $C$ is contained in a surface of degree $\leq k$, it suffices to show:
    
    \textbf{Claim 2.} If $d \geq d_{0}(k,n)$, then the intersection of $X$ with \textit{any} integral surface $S \subset \P^{n+1}$ of degree $\leq k$ is integral.

    Suppose for contradiction that there exists an integral surface $S \subset \P^{n+1}$ of degree $\leq k$ such that $S \cap X$ is not integral. Then we may write the $1$-cycle $S \cap X$ as a nontrivial sum $\sum_{i=1}^{m} C_{i}$ of a finite number of irreducible curves. Note that some of the $C_{i}$ may be the same if some irreducible component of the intersection is nonreduced. The condition $m \geq 2$ and the fact that $d \geq 2n$ guarantee that for each $i$, we have $d \leq \deg C_{i} \leq (k-1)d$. The inductive hypothesis then implies that each component $C_{i}$ is equal to a generically transverse intersection $T_{i} \cap X$, for some surfaces $T_{i}$ of degree $a_{i}$ with $\sum a_{i} = \deg S$. By degree reasons, $T_{i} \not= S$ for all $i$. But then, for any $i$, we have
    \[ C_{i} \subset S \cap  T_{i}, \]
    which implies that
    \[ d\cdot a_i\leq\deg S \cap  T_{i} \leq \deg S \cdot a_{i} \leq k\cdot a_i < d\cdot a_i, \]
    a contradiction.
\end{proof}

\begin{remark}\label{rem:constantd_0}
The last term in the expression for $d_{0}(s,n)$ in Theorem~\ref{thm:curvesHigherDegree} grows approximately like $C \cdot s^{1 + \frac{1}{2} + \cdots + \frac{1}{n-1}}$, where $C = C(n)$ is a constant depending only on $n$. For $n \leq 4$, this implies that $d_{0}(s,n)$ grows quadratically in $s$. The proof below will show that the same constants that appear above for $d_{0}(s,n)$ can be used in the statement of Theorem~\ref{thm:B}.
\end{remark}

Next, we will give:

\begin{proof}[Proof of Theorem~\ref{thm:B}]
    Choose $d_{0}$ as in Theorem~\ref{thm:curvesHigherDegree} or Remark~\ref{rem:constantd_0}.
    
    First, we note that the uniqueness of $V$ is automatic. Indeed, if there were two subvarieties $V$ and $V'$ of degrees $\frac{\delta}{d}$ with $Y=X\cap V=X\cap V',$ then $Y$ would be a connected component of $V\cap V'.$ But then we would have $\deg V \cdot \deg V'=\frac{\delta^2}{d^2}\geq\deg Y=\delta,$ so we would have $\delta\geq d^2$ and $s\geq d$, which is impossible for our choice of $d_0$.

    Now we will induct on $\dim Y$. The base case follows from Theorem~\ref{thm:curvesHigherDegree}. Let $Y \subset X$ be a subvariety of dimension $\geq 2$. Consider a general pencil of hyperplanes $\{ H_{t} \}_{t \in \P^{1}}$, each of which meets $Y$ transversally. Applying the inductive hypothesis to the general slice $Y \cap H_{t}$ (which is integral by Bertini's theorem), there is a unique subvariety $V_{t} \subset \P^{n+1}$ of degree $e \leq s$ with the property that $Y \cap H_{t} = X \cap V_{t}$. By uniqueness, the $V_{t}$ belong to a family parametrized by $t \in \P^{1}$. We claim that the $\{ V_{t} \}_{t \in \P^{1}}$ sweep out the desired $V$ with the property that $Y = X \cap V$.
    
    Fix any $t \in \P^{1}$. It suffices to show that $V_{t} = V \cap H_{t}$. For this, it suffices to show that for any other point $t' \in \P^{1}$, the intersection $V_{t'} \cap H_{t}$ is independent of $t'$. Let $\Delta$ be the base locus $\cap H_t$. The intersection $V_{t'}\cap H_t$ is a degree $e$ subvariety of $\Delta$ with the property that $(V_{t'}\cap H_t) \cap X = H_t\cap(V_t'\cap X) = H_t\cap (H_{t'}\cap Y)= \Delta \cap Y$. As previously discussed, this property uniquely determines $V_{t'}\cap H_t$, showing that it is independent of $t'$.
\end{proof}

We conjecture that the following extension of Theorem~\ref{thm:planeCurvesMinimal} should hold:

\begin{conjecture}
Let $X \subset \P^{n+1}$ be a general hypersurface of degree $d$ and dimension $n \geq 3$ such that $\frac{3}{2}n + 2 \leq d \leq 2n-1$. Then any curve $C \subset X$ of degree $\delta \leq d$ must be contained in a 2-plane.
\end{conjecture}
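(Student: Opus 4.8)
The plan is to adapt the incidence-correspondence machinery from \S2--\S3, which was engineered for the range $d \geq 2n$, to the shifted range $\frac{3}{2}n + 2 \leq d \leq 2n-1$. The overall strategy remains: assume for contradiction that $C$ spans a $\P^r$ with $r \geq 3$ and derive a violation of a Castelnuovo-type bound. First I would re-examine which inputs genuinely required $d \geq 2n$. The crucial one is Voisin's geometric genus bound $p_g(C) \geq 1 + \frac{1}{2}(d-2n-1)\delta$, which is \emph{vacuous} (or negative) precisely when $d < 2n+1$; so in this new range it gives no information, and the proof of Proposition~\ref{prop:ArithmeticGenusIncidence} loses its leverage through the $-4p_g(C)$ term. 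Because of this, the genus-based contradiction in Steps 1--5 of \S3 no longer runs verbatim, and the constant $\frac{3}{2}n+2$ in the conjecture is presumably the exact threshold at which a modified version of these estimates still closes.

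The key steps, in order, would be as follows. First, re-establish the analogues of Propositions~\ref{prop:incidence2plane} and \ref{prop:incidenceQuadricSurf}: the dimension counts there reduce to inequalities like $a(d-a) > 3n-4$ and $(a+b)d - 2ab - 1 - 4n > 0$, and I would check that these survive for $d \geq \frac{3}{2}n+2$ over the relevant ranges of $a,b$ (here the hypothesis $\delta \leq d$ rather than $d+2$ actually helps, since it restricts the degrees $a$ of split-off components). Second, since $X$ can still contain lines when $d < 2n$, I must account for the possibility $\dim\Span(C) = 2$ with $C$ a line or low-degree plane curve, but the conclusion only asserts $C$ lies in a $2$-plane, so the $r=2$ case is the desired outcome rather than a case to exclude. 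Third, and most importantly, I would replace Voisin's now-useless bound by the incidence estimate \eqref{eq:r=n+1arithmeticgenusbound} alone, combined with Riemann--Roch and Clifford as in Cases 2(a), 2(b), 3(a), 3(b), 5(a), 5(b), and re-run the contradictions with the Castelnuovo bounds \eqref{eq:anyr>=3Castelnuovo}--\eqref{eq:r=3Castelnuovo}. The arithmetic there produces inequalities of the shape $p_a(C) \geq (d - r - \tfrac{r+1}{2})\delta + \cdots$ versus a quadratic upper bound $\sim \frac{1}{2(r-1)}\delta^2$; since $\delta \leq d$ now (a stronger constraint than $\delta \leq d+2$), one substitutes $d \geq \delta$ and $d \geq \frac{3}{2}n+2$ and checks the quadratic-in-$\delta$ discriminant.

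The main obstacle will be that, without Voisin's bound, the lower bounds on $p_a(C)$ coming from \eqref{eq:r=n+1arithmeticgenusbound} are weaker, and the Clifford-theorem cases (the "special" cases 2(b), 3(b), 5(b)) are precisely where the old proof was tightest. In those cases one only gets $p_a(C) \gtrsim \frac{1}{5}(d - \frac{r+1}{2})\delta$, and with $d$ as small as $\frac{3}{2}n+2$ the coefficient may no longer dominate the Castelnuovo quadratic for the full range $3 \leq r \leq n+1$ and $\delta \leq d$. I expect the genuinely delicate work to be the large-span cases (Step 2 analogue, $r$ close to $n+1$), where the linear-in-$\delta$ gain from the incidence must beat $\frac{1}{2(r-1)}\delta^2$; the constant $\frac{3}{2}n+2$ should emerge from optimizing exactly this competition, and a secondary difficulty is that the finitely many small-degree cases (small $\delta$, as in the $(n,d)=(4,8)$ analysis and the by-hand arguments of Steps 4--5) will have to be redone and may require a separate, possibly more refined, incidence count or an appeal to \cite{CR22}-type geometric genus results valid below the $2n$ threshold.
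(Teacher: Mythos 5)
There is no proof of this statement in the paper to compare against: it appears as a \emph{conjecture}, explicitly left open by the authors (the only remark they make is that, \emph{if} it held, a variation of the argument in Proposition~\ref{prop:incidence2plane} together with \cite{RY20} would pin down the possible plane curves). So the first thing to recognize is that you are being asked about an open problem, and your submission should be judged as an attempted proof of something the authors themselves did not establish.

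As a proof, your proposal has a genuine gap: it is a plan, not an argument, and the plan's own central difficulty is left unresolved. You correctly diagnose that the range $d\le 2n-1$ breaks the two genus inputs that drive \S 3 --- Voisin's bound becomes vacuous, and the no-rational-curves statement fails (indeed $X$ contains lines, and the conjecture must now \emph{rule out} non-planar rational curves such as twisted cubics and rational normal curves, for which $p_g(C)=0$ and every genus-based lower bound degenerates). But having identified this, you do not supply a replacement: you say the Clifford-theorem cases ``may no longer dominate the Castelnuovo quadratic,'' that the large-span cases are ``genuinely delicate,'' and that the small-degree cases ``will have to be redone.'' Each of these is exactly where a proof would have to live, and none is carried out. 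In particular, Step~1 of \S 3 (establishing $p_g(C)\ge 4$) has no analogue in your outline --- \cite{CR22}-type results are cited in the original argument under the hypothesis $d\ge 2n$, and without some substitute the entire cascade of Cases 2(a)--5(b) cannot start. A correct contribution here would either close these estimates explicitly for $d\ge\tfrac32 n+2$ (thereby resolving the conjecture, at least in part) or demonstrate concretely where they fail; as written, the proposal does neither and therefore does not constitute a proof.
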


\noindent By a variation of the argument in Proposition~\ref{prop:incidence2plane} together with the main result of \cite{RY20}, this would show that $C$ must be one of the following curves: a line, an irreducible plane curve of degree $d-1$ (that is residual to a line), or an irreducible plane curve of degree $d$.

\bibliographystyle{alpha} 
\bibliography{Biblio}

\footnotesize{
\textsc{Department of Mathematics, Harvard University, MA 02138} \\
\indent \textit{E-mail address:} \href{mailto:nathanchen@math.columbia.edu}{nathanchen@math.harvard.edu}

\textsc{Department of Mathematics, Massachusetts Institute of Technology, MA 02139} \\
\indent \textit{E-mail address:} \href{mailto:dhy@mit.edu}{dhy@mit.edu}
}

\end{document}